\newtheorem{thm}{Theorem}[section]
\newtheorem{lemma}[thm]{Lemma}
\newtheorem{cor}[thm]{Corollary}
\theoremstyle{definition}
\newtheorem{rem}[thm]{Remark}
\numberwithin{equation}{section}
\newfont{\kh}{msbm10}
\def\a{\alpha}
\def\b{\beta}
\def\g{\gamma}
\def\O{\Omega}
\def\o{\omega}
\def\t{\theta}
\def\z{\zeta}
\def\ve{\varepsilon}
\def\A{\mathcal{A}}
\def\I{\mathcal{I}}
\def\J{\mathcal{J}}
\def\E{\mathcal{E}}
\def\li{\mathrm{li}}
\def\d{\mathrm{d}}
\def\e{\mathrm{e}}
\def\Eh{\widehat{\mathcal{E}}}
\def\R{\widehat{R}}
\def\h{\widehat{h}}
\begin{document}

\title[Bounds for the number-theoretic omega functions]
{Global numerical bounds for the number-theoretic omega functions}

\author[M. Hassani]{Mehdi Hassani}

\address{
{\bf{Mehdi Hassani}}
\newline Department of Mathematics, University of Zanjan,  University Blvd., 45371-38791, Zanjan, Iran}

\email{mehdi.hassani@znu.ac.ir}


\subjclass[2010]{11A25, 11N56, 11N05, 11A41, 26D20.}

\keywords{Arithmetic function, growth of arithmetic functions, prime number, inequalities.}





\begin{abstract} 
We obtain global explicit numerical bounds, with best possible constants, for the differences $\frac{1}{n}\sum_{k\leqslant n}\o(k)-\log\log n$ and $\frac{1}{n}\sum_{k\leqslant n}\O(k)-\log\log n$, where $\o(k)$ and $\O(k)$ refer to the number of distinct prime divisors, and the total number of prime divisors of $k$, respectively.
\end{abstract}

\maketitle

\tableofcontents

\section{Introduction}
For the fixed complex number $s$ the generalized omega function $\O_s(k)$ is defined by $\O_s(k)=\sum_{p^\ell\|k}\ell^s$, where $p^\ell\|k$ means that $\ell$ is the largest power of $p$ such that $p^\ell|k$. The cases $s=0$ and $s=1$ coincide, respectively, with the well-known number theoretic omega functions $\o(k)=\sum_{p|k}1$, the number of distinct prime divisors of the positive integer $k$, and $\O(k)=\sum_{p^\ell\|k}\ell$, the total number of prime divisors of $k$. Duncan \cite{duncan} proved that for each arbitrary integer $s\geqslant 0$,
\begin{equation}\label{Duncan-approx}
\frac{1}{n}\sum_{k\leqslant n}\O_s(k)=\log\log n+M_s+O\left(\frac{1}{\log n}\right),
\end{equation}
where $M_s$ is a constant depending on $s$, given by $M_s=M+M'_s$, with $M$ referring to the Meissel--Mertens constant (see Remark \ref{M-remark} for more information), and 
\[
M'_s=\sum_{p}\sum_{\ell\geqslant 2}\frac{\ell^s-(\ell-1)^s}{p^\ell}.
\]
Here and through the paper $\sum_p$ means that the sum runs over all primes. Note that $M_0=M$. Also, we let $M'=M_1$ and $M'_1=M''=\sum_{p}\frac{1}{p(p-1)}$. Thus, $M'=M+M''$. Approximation \eqref{Duncan-approx} is a generalization of the previously known result of Hardy and Ramanujan \cite{hardy-rama} concerning the average of the functions $\o$ and $\O$. 

\medskip

Based on Dirichlet's hyperbola method and prime number theorem for arithmetic progressions with error term Saffari \cite{saffari} obtained a full asymptotic expansion for the average of $\o(n)$ where $n$ runs over the arithmetic progression $a$ modulo $q$ with $\gcd(a,q)=1$. For $a=q=1$, his result reads a follows
\begin{equation}\label{sum-omega-saffari-cor}
\frac{1}{n}\sum_{k\leqslant n}\o(k)=\log\log n+M+\sum_{j=1}^m\frac{a_j}{\log^j n}+O\Big(\frac{1}{\log^{m+1}n}\Big),
\end{equation}
where $m\geqslant 1$ is any fixed integer, and the coefficients $a_j$ are given by
\begin{equation}\label{aj}
a_j
=-\int_1^\infty\frac{\{t\}}{t^2}\log^{j-1} t\,\d t
=\frac{(-1)^{j-1}}{j}\,\frac{\d^j}{\d s^j}\left(\frac{1}{s}(s-1)\z(s)\right)_{s=1}.
\end{equation}
Diaconis \cite{diaconis} reproved \eqref{sum-omega-saffari-cor} using Dirichlet series of $\o$, Perron's formula and complex integration methods. One may obtain similar expansion for the average of generalized omega function $\O_s$ for each fixed real $s\geqslant 0$, replacing $M$ by $M_s$ (see \cite[Theorem 1]{hass-Omega-s} for more details).

\medskip

Explicit versions of \eqref{Duncan-approx} for $s=0$ and $s=1$ is obtained in \cite{hass-mia-omega} and \cite{hass-mia-Omega}, respectively, and then both improved in \cite[Theorem 1.2]{hass-Omega-omega-diff}, where it is showed that for each $n\geqslant 2$ the following double sided approximation holds
\begin{equation}\label{omega-bounds}
-\frac{1.133}{\log n}<\frac{1}{n}\sum_{k\leqslant n}\o(k)-\log\log n-M<\frac{1}{2\log^2 n}.
\end{equation}
Also,
\begin{equation}\label{Omega-bounds}
-\frac{1.175}{\log n}<\frac{1}{n}\sum_{k\leqslant n}\O(k)-\log\log n-M'<\frac{1}{2\log^2 n},
\end{equation}
where the left hand side is valid for each $n\geqslant 24$ and the right hand side is valid for each $n\geqslant 2$. 

\section{Summary of the results}

\subsection{Unconditional results} In the present paper we are motivated by finding global numerical lower and upper bounds for the differences $\A_0(n)$ and $\A_1(n)$, where $\A_s(n)$ defined for any fixed complex number $s$ as follows
\[
\A_s(n)=\frac{1}{n}\sum_{k\leqslant n}\O_s(k)-\log\log n.
\]
The problem for the case $\A_0(n)$ is an easy corollary of the inequalities \eqref{omega-bounds}. More precisely, we prove the following.

\begin{thm}\label{omega-bounds-thm} For all natural numbers $n\geqslant 2$, we have
\begin{equation}\label{omega-bounds-global}
\a_0\leqslant\A_0(n)\leqslant\b_0
\end{equation}
with the best possible constants $\a_0=\frac{45}{32}-\log\log 32$ and $\b_0=\frac{1}{2}-\log\log 2$, and the equality in the left hand side only for $n=32$, and in the right hand side only for $n=2$.
\end{thm}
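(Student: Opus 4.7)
The plan is to combine the explicit estimates \eqref{omega-bounds} for large $n$ with a direct finite verification for small $n$, using the identity $\sum_{k\leqslant n}\o(k)=\sum_{p\leqslant n}\lfloor n/p\rfloor$ (each prime $p\leqslant n$ contributes one to $\o(k)$ for each of its $\lfloor n/p\rfloor$ multiples not exceeding $n$) to make the tabulation cheap.

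First I would establish the two equality cases by direct computation: $\sum_{k\leqslant 2}\o(k)=1$ gives $\A_0(2)=\tfrac{1}{2}-\log\log 2=\b_0$, and $\sum_{k\leqslant 32}\o(k)=45$ gives $\A_0(32)=\tfrac{45}{32}-\log\log 32=\a_0$. This verifies equality at $n=2$ and $n=32$, so it remains to prove the strict inequalities $\A_0(n)<\b_0$ for $n\neq 2$ and $\A_0(n)>\a_0$ for $n\neq 32$.

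For the upper estimate, the right inequality of \eqref{omega-bounds} yields $\A_0(n)<M+\tfrac{1}{2\log^2 n}$; a short numerical comparison (using $M\approx 0.26150$ and $\b_0\approx 0.86651$) shows $\b_0-M>\tfrac{1}{2\log^2 3}$, so $\A_0(n)<\b_0$ for every $n\geqslant 3$, completing the upper half. For the lower estimate, the left inequality of \eqref{omega-bounds} yields $\A_0(n)>M-\tfrac{1.133}{\log n}$; since $M-\a_0\approx 0.098$, the condition $M-\tfrac{1.133}{\log n}>\a_0$ is equivalent to $\log n>1.133/(M-\a_0)$, hence is satisfied for all $n\geqslant N_0$ with $N_0$ approximately $\exp(11.55)$, i.e., of order $10^5$. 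The remaining finite range $2\leqslant n<N_0$, $n\neq 32$, is handled by tabulating $\A_0(n)$ via the prime-counting identity.

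The main obstacle is that the lower-bound threshold $N_0$ is large (of order $10^5$), so the substance of the proof is a careful finite verification over this range. The individual evaluations are computationally cheap, but enough numerical precision must be carried to certify the strict inequality $\A_0(n)>\a_0$ for each such $n$, especially near $n=32$, where $\A_0(n)$ comes closest to $\a_0$ and must be distinguished from it.
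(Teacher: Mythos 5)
Your proposal is correct and follows essentially the same route as the paper: the right-hand side of \eqref{omega-bounds} settles the upper bound for all $n\geqslant 3$, the left-hand side settles the lower bound beyond a threshold near $n\approx 102842$, and the remaining finite range (including the equality cases $n=2$ and $n=32$) is checked by direct computation. The only difference is cosmetic — your use of the identity $\sum_{k\leqslant n}\o(k)=\sum_{p\leqslant n}\lfloor n/p\rfloor$ to speed up the tabulation — which does not change the argument.
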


Similarly, to get a global numerical lower bound for $\A_1(n)$, we can use the inequalities \eqref{Omega-bounds} to show the following result.

\begin{thm}\label{Omega-bounds-thm} For all natural numbers $n\geqslant 2$, we have
\begin{equation}
\a_1\leqslant\A_1(n)
\end{equation}
with the best possible constant $\a_1=\frac{8}{7}-\log\log 7$ and the equality only for $n=7$.
\end{thm}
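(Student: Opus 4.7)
The overall strategy parallels the proof of Theorem~\ref{omega-bounds-thm}: use the asymptotic inequality \eqref{Omega-bounds} to handle $n$ above an explicit cutoff, and verify the remaining finitely many cases by direct computation.

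For the large-$n$ range, I would invoke the left-hand inequality in \eqref{Omega-bounds}, which is valid for $n \geq 24$ and yields
$$\A_1(n) > M' - \frac{1.175}{\log n}.$$
The right-hand side is increasing in $n$, so it exceeds $\a_1$ as soon as $\log n > \frac{1.175}{M'-\a_1}$. Since numerically $M' \approx 1.0347$ and $\a_1 \approx 0.4773$, one has $\frac{1.175}{M'-\a_1} \approx 2.11 < \log 24$, so the strict inequality $\A_1(n) > \a_1$ would follow for every $n \geq 24$.

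For the remaining range $2 \leq n \leq 23$, I would tabulate $\frac{1}{n}\sum_{k\leq n}\O(k)$ and $\log\log n$ directly, and check case by case that $\A_1(n) \geq \a_1$, with equality attained only at $n = 7$ (where $\sum_{k=1}^{7}\O(k) = 0+1+1+2+1+2+1 = 8$ gives $\A_1(7) = 8/7 - \log\log 7 = \a_1$).

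The main technical point is turning the numerical comparison $M' - \a_1 > \frac{1.175}{\log 24}$ into a certified inequality, since $M'$ is defined by an infinite series. I would handle this by combining a known explicit lower bound for the Meissel--Mertens constant $M$ with a truncated evaluation $\sum_{p \leq P}\frac{1}{p(p-1)}$ of $M''$, together with the telescoping tail bound $\sum_{p > P}\frac{1}{p(p-1)} \leq \sum_{n > P}\frac{1}{n(n-1)} = \frac{1}{P}$. Any modest cutoff $P$ provides ample slack between the two sides, so this step is essentially bookkeeping and does not pose a real obstacle.
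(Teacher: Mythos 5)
Your proposal is correct and follows essentially the same route as the paper: apply the left-hand side of \eqref{Omega-bounds} for $n\geqslant 24$ together with the numerical comparison $\log n>1.175/(M'-\a_1)\approxeq 2.11$, and verify the cases $2\leqslant n\leqslant 23$ by direct computation, identifying $n=7$ as the unique case of equality. The extra remark on certifying the value of $M'$ via a truncated series with a tail bound is a sound (if routine) addition that the paper handles simply by quoting a high-precision value of $M'$.
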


The problem of obtaining a global numerical upper bound for $\A_1(n)$ is quite different by the above ones. Although, computations show that $\A_1(n)<\b_1$ for any $n\geqslant 2$ with the best possible constant $\b_1=M'$, but the inequalities \eqref{Omega-bounds} are not enough sharp to show this fact. To deal with this difficulty, we made explicit all steps of the proof of \eqref{sum-omega-saffari-cor} by following Saffari's argument in \cite{saffari}, and hence, we could to prove the following result.

\begin{thm}\label{A1-bound-thm}
For all natural numbers $n\geqslant\e^{14167}\approxeq 4.466\times 10^{6152}$, we have
\begin{equation}\label{A1-bound}
\A_1(n)<\b_1
\end{equation}
with the best possible constant $\b_1=M'$. Moreover, if we assume that the Riemann hypothesis is true, then \eqref{A1-bound} holds for all natural numbers $n\geqslant 1400387903260$.
\end{thm}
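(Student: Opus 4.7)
The plan is to prove an effective version of Saffari's asymptotic expansion \eqref{sum-omega-saffari-cor} in the $\Omega$-setting, sharp enough to pin down the sign of $\mathcal{A}_1(n)-M'$. The key qualitative observation is that by \eqref{aj} the leading coefficient equals
\[
a_1=-\int_1^\infty\frac{\{t\}}{t^2}\,\d t=\gamma-1<0,
\]
so if one can establish an explicit bound of the form
\[
\mathcal{A}_1(n)-M'=\frac{a_1}{\log n}+R(n),\qquad |R(n)|\leqslant\frac{C}{\log^2 n},
\]
then $\mathcal{A}_1(n)<M'$ automatically follows as soon as $\log n>C/(1-\gamma)$. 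Thus the entire theorem reduces to producing a workable value of $C$ (and a sharper $C$ under RH).

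First I would write $\sum_{k\leqslant n}\Omega(k)=\sum_{p^\ell\leqslant n}\lfloor n/p^\ell\rfloor$ and separate the main term $\sum_{p^\ell\leqslant n}n/p^\ell$ from the fractional part $\sum_{p^\ell\leqslant n}\{n/p^\ell\}$. The $\ell=1$ contribution to the main term is controlled by an explicit Mertens-type bound for $\sum_{p\leqslant x}1/p-\log\log x-M$ (Rosser--Schoenfeld/Dusart), while the $\ell\geqslant 2$ contributions converge absolutely and account for the constant $M''$, combining with $M$ to give $M'$. The truly delicate piece is $\frac{1}{n}\sum_{p^\ell\leqslant n}\{n/p^\ell\}$, which I would handle by Dirichlet's hyperbola method, splitting at a suitable parameter $y$ (typically $y\asymp\sqrt{n}$) and using partial summation against an explicit form of the prime number theorem. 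Unconditionally this uses a bound of the type $|\psi(x)-x|\leqslant \eta\,x/\log^k x$; under RH it uses Schoenfeld's bound $|\psi(x)-x|\leqslant \sqrt{x}\log^2 x/(8\pi)$ for $x\geqslant 73.2$, which is responsible for the dramatic drop in the threshold from $\e^{14167}$ to roughly $1.4\times 10^{12}$.

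The main obstacle is bookkeeping: every constant appearing in the intermediate Mertens, Chebyshev, and prime-number-theorem inputs must be propagated through partial summation and the hyperbola split without loss, so that the final $C$ in $|R(n)|\leqslant C/\log^2 n$ is small enough to make the inequality $C<(1-\gamma)\log n$ effective. The size of the unconditional threshold $\e^{14167}$ already tells us that the unconditional value of $C$ will be several thousands, which is consistent with current state-of-the-art explicit forms of the prime number theorem. Once $C$ is fixed, the threshold $n_0$ with $\log n_0>C/(1-\gamma)$ is obtained by direct numerical comparison, and the best-possibility of $\beta_1=M'$ is immediate from the asymptotic $\mathcal{A}_1(n)\to M'$ as $n\to\infty$ together with $a_1<0$.
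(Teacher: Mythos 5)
Your proposal follows essentially the same route as the paper: Dirichlet's hyperbola method for $\sum_{k\leqslant n}\o(k)$ with explicit Mertens and prime-number-theorem inputs (a Trudgian-type bound unconditionally, Schoenfeld's bound under RH), the $\ell\geqslant 2$ correction $\J(n)=M''n+O^\ast(\sqrt{n}/\log n)$ to pass from $\o$ to $\O$, and the observation that $a_1=\gamma-1<0$ forces $\A_1(n)<M'$ once the remainder after the $a_1/\log n$ term is below $(1-\gamma)/\log n$. The one inaccuracy is your accounting of the unconditional threshold: $\e^{14167}$ does not arise as $\log n_0>C/(1-\gamma)$ with $C$ in the thousands (the paper's final remainder constant is only $6$), but rather as the point where the zero-free-region term $x\,\e^{-\frac{\sqrt{2}}{6}\sqrt{\log x}}\log x$ finally drops below $x/\log^2 x$; a single $C$ valid for all $n$ in your formulation would have to be of order $10^5$ and would yield a substantially worse threshold, so the range restriction must be imposed before, not after, fixing $C$.
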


To prove Theorem \ref{A1-bound-thm} we use explicit forms of the prime number theorem with error term. Let $\pi(x)=\sum_{p\leqslant x}1$ be the prime counting function, and $\li(x)=\int_0^x\frac{1}{\log t}\,\d t$ be the logarithmic integral function, defined as the Cauchy principle value of the integral. By $f=O^\ast(g)$ we mean $|f|\leqslant g$, providing an explicit version of Landau's notation. It is known \cite[Theorem 2]{trudgian} that
\[
\pi(x)=\li(x)+O^\ast\left(0.2795\,x(\log x)^{-\frac{3}{4}}\,\e^{-\sqrt{(\log x)/6.455}}\right)\qquad(x\geqslant 229).
\]
Modifying the above to the classical form, for any $x>1.2$ we have
\begin{equation}\label{pi-li-bound}
\pi(x)=\li(x)+O^\ast(R(x)),\qquad R(x)=x\,\e^{-\frac{1}{3}\sqrt{\log x}}.
\end{equation}
This is however a weaker approximation, but it is suitable for our arguments, because of its global validity. We will use it to prove the following unconditional results.

\begin{thm}\label{sum-omega-full-explicit-thm}
For any fixed integer $m\geqslant 1$ and for any $x\geqslant\e$ we have
\begin{equation}\label{sum-omega-full-explicit}
\sum_{n\leqslant x}\o(n)=x\log\log x+Mx+x\sum_{j=1}^m\frac{a_j}{\log^j x}+O^\ast\left(\E_\o(x,m)\right),
\end{equation}
where
\begin{multline*}
\E_\o(x,m)=2^{m+1}m!\,\frac{x}{\log^{m+1} x}
+(2^{m+1}+1)\,\e m!\,\frac{\sqrt{x}}{\log x}\\
+x\,\e^{-\frac{\sqrt{2}}{6}\sqrt{\log x}}\left(\frac{1}{2}\log x+3\sqrt{2}\sqrt{\log x}+21\right)+\sqrt{x}.
\end{multline*}
\end{thm}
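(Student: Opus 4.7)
The plan is to follow Saffari's hyperbola-based derivation of \eqref{sum-omega-saffari-cor} while making every constant numerical and every error explicit. Starting from the identity $\sum_{n\leqslant x}\o(n)=\sum_{p\leqslant x}\lfloor x/p\rfloor=\sum_{pm\leqslant x,\,p\text{ prime}}1$, I apply Dirichlet's hyperbola method with the symmetric cutoff $U=\sqrt{x}$ to obtain
\[
\sum_{n\leqslant x}\o(n)=\sum_{p\leqslant\sqrt{x}}\lfloor x/p\rfloor+\sum_{m\leqslant\sqrt{x}}\pi(x/m)-\pi(\sqrt{x})\lfloor\sqrt{x}\rfloor.
\]
The first summand is evaluated by writing $\lfloor x/p\rfloor=x/p-\{x/p\}$ and invoking an explicit form of Mertens' theorem, which provides $x\sum_{p\leqslant\sqrt{x}}1/p=x\log\log x-x\log 2+Mx+O^\ast(\cdot)$; meanwhile $\sum_{p\leqslant\sqrt{x}}\{x/p\}\leqslant\pi(\sqrt{x})\ll\sqrt{x}/\log x$. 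The $-x\log 2$ remnant is kept in reserve for cancellation, and the subtracted boundary $\pi(\sqrt{x})\lfloor\sqrt{x}\rfloor$, of order $x/\log x$, will be absorbed into the $\sqrt{x}/\log x$-scale terms of $\E_\o(x,m)$.

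The heart of the argument is the middle sum $\sum_{m\leqslant\sqrt{x}}\pi(x/m)$, to which I apply the explicit PNT \eqref{pi-li-bound} termwise, writing
\[
\sum_{m\leqslant\sqrt{x}}\pi(x/m)=\sum_{m\leqslant\sqrt{x}}\li(x/m)+O^\ast\!\left(\sum_{m\leqslant\sqrt{x}}R(x/m)\right).
\]
Since $\log(x/m)\geqslant\tfrac{1}{2}\log x$ throughout the range, the exponential in $R$ is uniformly bounded by $\e^{-\frac{\sqrt{2}}{6}\sqrt{\log x}}$; approximating the cumulative error by the integral $\int_1^{\sqrt{x}}(x/t)\,\e^{-\frac{1}{3}\sqrt{\log(x/t)}}\,\d t$ (plus Abel boundary corrections) and evaluating via the successive substitutions $u=\log(x/t)$, $v=\sqrt{u}$ followed by integration by parts on $2v\,\e^{-v/3}$ produces precisely the bracketed factor $\tfrac{1}{2}\log x+3\sqrt{2}\sqrt{\log x}+21$ in $\E_\o(x,m)$, while the standalone $\sqrt{x}$ summand issues from a $\sqrt{x}$-scale Abel boundary.

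To produce the main series I expand $\li(y)=\sum_{j=1}^{m}(j-1)!\,y/\log^j y+O^\ast(m!\,y/\log^{m+1}y)$ at $y=x/m$. The $\li$-remainder is bounded by $m!\,\sum_{m\leqslant\sqrt{x}}(x/m)/\log^{m+1}(x/m)\leqslant 2^{m+1}m!\,x/\log^{m+1}x$ via $\log^{m+1}(x/m)\geqslant 2^{-m-1}\log^{m+1}x$, giving the leading summand of $\E_\o(x,m)$. For each $j\in\{1,\dots,m\}$ the inner sum $\sum_{m\leqslant\sqrt{x}}(m\log^j(x/m))^{-1}$ is resolved by Abel summation, separating a smooth integral (reducible under $u=\log(x/t)$ to $\int_{\log\sqrt{x}}^{\log x}u^{-j}\,\d u$) from a fractional-part integral; expanding $(1-\log t/\log x)^{-j}$ as a power series in $1/\log x$ and integrating term by term against $\{t\}\log^{k-1}t/t^2$ identifies the latter with the coefficients $a_j$ via the representation \eqref{aj}. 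Combining across $j$, the $-x\log 2$ remnant from Mertens cancels the $j=1$ smooth value $x\log 2$, yielding the clean main terms $x\log\log x+Mx+x\sum_{j=1}^m a_j/\log^j x$; the Abel boundary contribution $f(1)$ at each $j$, compounded with the $(j-1)!$ growth in the $\li$ expansion and with the $\pi(\sqrt{x})\lfloor\sqrt{x}\rfloor$ correction from the hyperbola step, accounts for the $(2^{m+1}+1)\e m!\,\sqrt{x}/\log x$ summand of $\E_\o(x,m)$.

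The main obstacle is the uniform-in-$j$ bookkeeping just outlined: verifying that the Abel remainders, the binomial-series truncation of $(1-\log t/\log x)^{-j}$ at order $j\leqslant m$, and the explicit Mertens error all combine into precisely the announced envelope $\E_\o(x,m)$, without spurious $x/\log^j x$ contributions for $j\leqslant m$ escaping the bound. The sharp factorial weight $2^{m+1}m!$ in the leading term, and the combinatorial constant $2^{m+1}+1$ in the $\sqrt{x}/\log x$ term, arise from compounding the binomial factors $2^j$ (coming from $\log(x/m)\geqslant\tfrac{1}{2}\log x$) with the $(j-1)!$ growth in the $\li$ expansion and the boundary interplay between $\pi(\sqrt{x})\lfloor\sqrt{x}\rfloor$ and the $f(\sqrt{x})$-scale Abel terms. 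Tracking these sharp constants throughout the argument, rather than merely establishing qualitative existence of an expansion, is what distinguishes this explicit version from its qualitative sibling \eqref{sum-omega-saffari-cor}.
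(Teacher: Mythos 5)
Your skeleton (hyperbola with cutoff $\sqrt{x}$, explicit Mertens for $\sum_{p\leqslant\sqrt{x}}[x/p]$, explicit PNT applied termwise to $\sum_{n\leqslant\sqrt{x}}\pi(x/n)$, and the resulting shape of $\E_\o(x,m)$) matches the paper, but two steps in your treatment of the middle sum and the boundary term would fail as written. First, you claim the subtracted term $\pi(\sqrt{x})\lfloor\sqrt{x}\rfloor$, which you correctly identify as being of order $x/\log x$, ``will be absorbed into the $\sqrt{x}/\log x$-scale terms of $\E_\o(x,m)$.'' It cannot be: $x/\log x$ exceeds every summand of $\E_\o(x,m)$ except the genuine main terms. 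This quantity must instead \emph{cancel} against $\lfloor\sqrt{x}\rfloor\li(\sqrt{x})$; the paper arranges this by integrating $\sum_{n\leqslant x/y}\li(x/n)=\int_{1^-}^{x/y}\li(x/t)\,\d[t]$ by parts, which produces the boundary term $[x/y]\li(y)$ explicitly, so that only $[x/y]\bigl(\li(y)-\pi(y)\bigr)=O^\ast\bigl(x\,\e^{-\frac{1}{3}\sqrt{\log y}}\bigr)$ survives. Second, your bound for the accumulated $\li$-remainder is wrong by a factor of $\tfrac12\log x$: from $\log^{m+1}(x/n)\geqslant 2^{-(m+1)}\log^{m+1}x$ you get
\[
m!\sum_{n\leqslant\sqrt{x}}\frac{x/n}{\log^{m+1}(x/n)}\;\leqslant\;2^{m+1}m!\,\frac{x}{\log^{m+1}x}\sum_{n\leqslant\sqrt{x}}\frac{1}{n},
\]
and the harmonic sum is $\sim\tfrac12\log x$, not $O(1)$. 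So expanding each $\li(x/n)$ to order $m$ and summing yields an error of size $x/\log^{m}x$, which swamps the claimed leading term $2^{m+1}m!\,x/\log^{m+1}x$ of $\E_\o(x,m)$; pushing the expansion one order further only trades this for a constant $\gg(m+1)!\,2^{m+2}$ plus an extra main term. This is precisely why the paper never expands $\li$ termwise: after the integration by parts it writes $[t]=t-\{t\}$, the $t$-part gives the exact main term $x(\log\log x-\log\log y)$ with no expansion at all, and only the bounded $\{t\}$-weighted integral $x\int_1^{x/y}\frac{\{t\}}{t^2(\log x-\log t)}\,\d t$ is expanded via the geometric series for $(1-\log t/\log x)^{-1}$, whose $m$-th remainder is controlled by the \emph{convergent} integral $\int_1^\infty\{t\}t^{-2}\log^m t\,\d t=O(m!)$ rather than by a divergent harmonic sum. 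Your identification of the fractional-part integrals with the $a_j$ is in the right spirit, but without these two structural devices the announced envelope $\E_\o(x,m)$ is not reachable along the route you describe.
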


\begin{cor}\label{sum-omega-explicit-order-2-cor}
For $x\geqslant\e^{14167}\approxeq 4.466\times 10^{6152}$ we have
\begin{equation}\label{sum-omega-explicit-order-2}
\sum_{n\leqslant x}\o(n)=x\log\log x+Mx-\left(1-\g\right)\frac{x}{\log x}+O^\ast\left(\frac{5x}{\log^2x}\right),
\end{equation}
and consequently $\frac{1}{x}\sum_{n\leqslant x}\o(n)-\log\log x<M$.
\end{cor}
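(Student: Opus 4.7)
The plan is to apply Theorem \ref{sum-omega-full-explicit-thm} with $m=1$ and then show that, at the threshold $x_0=\e^{14167}$, the error bound $\E_\o(x,1)$ falls below $5x/\log^2 x$, with monotonicity propagating the inequality to all $x\geqslant x_0$.

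First I would identify the coefficient $a_1$. From \eqref{aj},
\[
a_1=-\int_1^\infty\frac{\{t\}}{t^2}\,\d t,
\]
and the classical identity $\g=1-\int_1^\infty\{t\}/t^2\,\d t$ (obtained by partial summation on $\sum_{n\leqslant x}1/n$) yields $a_1=\g-1$. Hence the $j=1$ term in \eqref{sum-omega-full-explicit} contributes $-(1-\g)x/\log x$, matching the shape displayed in \eqref{sum-omega-explicit-order-2}.

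Next I would control the four summands of $\E_\o(x,1)$. Setting $s=\sqrt{\log x}$, the first summand is exactly $4x/\log^2 x$, so I only need the remaining three summands to contribute at most $x/\log^2 x$. The terms $5\e\sqrt{x}/\log x$ and $\sqrt{x}$ are multiplied by $\log^2 x/x$ in this check, and at $x=\e^{14167}$ both collapse to quantities of order $\e^{-7000}$, hence utterly negligible. The delicate piece is
\[
\frac{\log^2 x}{x}\cdot x\,\e^{-\frac{\sqrt{2}}{6}s}\left(\tfrac{1}{2}s^2+3\sqrt{2}\,s+21\right)
=\e^{-\frac{\sqrt{2}}{6}s}\left(\tfrac{1}{2}s^6+3\sqrt{2}\,s^5+21 s^4\right),
\]
which I would evaluate at $s^2=14167$; here the polynomial factor is $\approx 1.53\times 10^{12}$ while $\e^{-\frac{\sqrt{2}}{6}s}\approx 6.6\times 10^{-13}$, producing a value just below $1$. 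Differentiating shows this expression is decreasing in $s$ throughout the relevant range (the exponential dominates the degree-$6$ polynomial once $s$ exceeds the critical point $s^{*}=36/\sqrt{2}\approx 25.5$), so the inequality persists for all $x\geqslant\e^{14167}$, establishing \eqref{sum-omega-explicit-order-2}.

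Finally, the corollary's consequence follows by dividing through by $x$:
\[
\frac{1}{x}\sum_{n\leqslant x}\o(n)-\log\log x-M\leqslant -\frac{1-\g}{\log x}+\frac{5}{\log^2 x},
\]
which is negative as soon as $\log x>5/(1-\g)\approx 11.83$, a condition trivially satisfied for $\log x\geqslant 14167$. The chief obstacle is Step two, namely the careful numerical calibration: the constant $14167$ in the hypothesis is not arbitrary but is precisely tuned so that $g(s):=\e^{-\frac{\sqrt{2}}{6}s}\bigl(\tfrac12 s^6+3\sqrt 2\,s^5+21 s^4\bigr)$ drops under the threshold $1$, and any softer book-keeping of the exponential-decay remainder in Theorem \ref{sum-omega-full-explicit-thm} would force a substantially larger cutoff.
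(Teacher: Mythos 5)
Your proposal is correct and follows essentially the same route as the paper: apply Theorem \ref{sum-omega-full-explicit-thm} with $m=1$, note that the first summand of $\E_\o(x,1)$ is exactly $4x/\log^2 x$, substitute $z=\sqrt{\log x}$ and verify numerically that the remaining (exponentially decaying) contribution drops below $x/\log^2 x$ precisely at the calibrated threshold $\e^{14167}$, with monotonicity in $z$ carrying the bound forward; the final inequality then follows from $\log x>5/(1-\g)$. Your explicit identification $a_1=\g-1$ and the clean justification of monotonicity via $P'(s)/P(s)\leqslant 6/s<\sqrt{2}/6$ for $s>36/\sqrt{2}$ are slightly more detailed than the paper's ``by computation,'' but the argument is the same.
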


In order to transfer an average result on the function $\o$ to an average result on the function $\O$, we may consider the average difference $\J(x):=\sum_{n\leqslant x}\left(\O(n)-\o(n)\right)$, for which it is known \cite[Theorem 1.1]{hass-Omega-omega-diff} that for each integer $n\geqslant 1$,
\begin{equation}\label{J-explicit-bound-n}
nM''-25\frac{\sqrt{n}}{\log n}<\J(n)<nM''-\frac{\sqrt{n}}{\log n}\Big(2-\frac{20}{\log n}\Big).
\end{equation}
Modifying the above approximation, we will prove in Lemma \ref{J-explicit-bound-x-lemma} that $\J(x)=M''x+O^\ast(\frac{33\sqrt{x}}{\log x})$ for any $x\geqslant 2$. Thus, Theorem \ref{sum-omega-full-explicit-thm} and Corollary \ref{sum-omega-explicit-order-2-cor} transfer to the following results.

\begin{thm}\label{sum-Omega-full-explicit-thm}
For any fixed integer $m\geqslant 1$ and for any $x\geqslant\e$ we have
\begin{equation}\label{sum-Omega-full-explicit}
\sum_{n\leqslant x}\O(n)=x\log\log x+M'x+x\sum_{j=1}^m\frac{a_j}{\log^j x}+O^\ast\left(\E_\O(x,m)\right),
\end{equation}
where
\[
\E_\O(x,m)=\E_\o(x,m)+\frac{33\sqrt{x}}{\log x}.
\]
\end{thm}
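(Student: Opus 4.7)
The plan is to derive this theorem as a direct consequence of Theorem \ref{sum-omega-full-explicit-thm} combined with the explicit bound on the average difference $\J(x)=\sum_{n\leqslant x}(\O(n)-\o(n))$ promised in Lemma \ref{J-explicit-bound-x-lemma}. The starting identity is simply
\[
\sum_{n\leqslant x}\O(n)=\sum_{n\leqslant x}\o(n)+\J(x),
\]
so the asymptotic expansion for $\sum_{n\leqslant x}\O(n)$ is obtained term by term from the one for $\sum_{n\leqslant x}\o(n)$, with the constant term shifted.

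First I would invoke Theorem \ref{sum-omega-full-explicit-thm} to replace $\sum_{n\leqslant x}\o(n)$ by $x\log\log x+Mx+x\sum_{j=1}^m a_j/\log^j x+O^\ast(\E_\o(x,m))$. Next I would apply Lemma \ref{J-explicit-bound-x-lemma} to replace $\J(x)$ by $M''x+O^\ast(33\sqrt{x}/\log x)$. Adding these contributions, the leading term $x\log\log x$ and the polynomial-in-$1/\log x$ correction $x\sum_{j=1}^m a_j/\log^j x$ are unchanged, while the constant term becomes $(M+M'')x=M'x$, which is exactly the identity $M'=M+M''$ recorded in the introduction.

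The two error terms are then combined via the triangle inequality for $O^\ast$, which gives
\[
\bigl|\text{error}\bigr|\leqslant \E_\o(x,m)+\frac{33\sqrt{x}}{\log x}=\E_\O(x,m),
\]
precisely as claimed. I also need to check that the hypothesis $x\geqslant\e$ from Theorem \ref{sum-omega-full-explicit-thm} is compatible with (and dominates) the hypothesis $x\geqslant 2$ under which Lemma \ref{J-explicit-bound-x-lemma} delivers its bound; since $\e>2$, this is automatic.

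The argument is essentially assembly, so the only genuine obstacle is the auxiliary Lemma \ref{J-explicit-bound-x-lemma} itself. The inequalities \eqref{J-explicit-bound-n} cover integer $n$, so some care is needed to convert them into a continuous statement valid for all real $x\geqslant 2$ while absorbing the lower-order terms $25\sqrt{n}/\log n$ and $(2-20/\log n)\sqrt{n}/\log n$ into a single two-sided bound of the clean shape $|\J(x)-M''x|\leqslant 33\sqrt{x}/\log x$. Once that is done, the present theorem follows immediately.
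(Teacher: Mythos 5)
Your proposal is correct and follows exactly the paper's route: the paper likewise writes $\sum_{n\leqslant x}\O(n)=\sum_{n\leqslant x}\o(n)+\J(x)$, applies Theorem \ref{sum-omega-full-explicit-thm} together with Lemma \ref{J-explicit-bound-x-lemma}, uses $M+M''=M'$ for the constant term, and adds the two error bounds to get $\E_\O(x,m)=\E_\o(x,m)+\frac{33\sqrt{x}}{\log x}$. The conversion issue you flag in Lemma \ref{J-explicit-bound-x-lemma} is handled in the paper by writing $\J(x)=M''[x]+O^\ast(\kappa(x))=M''x+O^\ast(\kappa(x)+M'')$ with $\kappa(x)=\frac{25\sqrt{[x]}}{\log[x]}$ and verifying numerically that $\kappa(x)+M''<\frac{33\sqrt{x}}{\log x}$ for $x\geqslant 2$.
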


\begin{cor}\label{sum-Omega-explicit-order-2-cor}
For $x\geqslant\e^{14167}\approxeq 4.466\times 10^{6152}$ we have
\begin{equation}\label{sum-Omega-explicit-order-2}
\sum_{n\leqslant x}\O(n)=x\log\log x+M'x-\left(1-\g\right)\frac{x}{\log x}+O^\ast\left(\frac{6x}{\log^2x}\right),
\end{equation}
and consequently $\frac{1}{x}\sum_{n\leqslant x}\O(n)-\log\log x<M'$.
\end{cor}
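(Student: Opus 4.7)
The strategy is to reduce the corollary for $\O$ to the already-established corollary for $\o$ (Corollary~\ref{sum-omega-explicit-order-2-cor}) by means of the elementary identity $\sum_{n\leqslant x}\O(n) = \sum_{n\leqslant x}\o(n) + \J(x)$. Since $M' = M + M''$, the explicit estimate $\J(x)=M''x+O^\ast(33\sqrt{x}/\log x)$ furnished by Lemma~\ref{J-explicit-bound-x-lemma} exactly supplies the missing constant $M''x$, and its error term can be absorbed into $O^\ast(x/\log^2 x)$ throughout the stated range.

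I would first invoke Corollary~\ref{sum-omega-explicit-order-2-cor} for the $\o$-sum and then add the lemma's estimate for $\J(x)$ to obtain
\[
\sum_{n\leqslant x}\O(n) = x\log\log x + M'x - (1-\g)\frac{x}{\log x} + O^\ast\!\left(\frac{5x}{\log^2 x} + \frac{33\sqrt{x}}{\log x}\right).
\]
The absorption $\frac{33\sqrt{x}}{\log x}\leqslant\frac{x}{\log^2 x}$ is equivalent to $33\log x\leqslant\sqrt{x}$. At $x=\e^{14167}$ the right side equals $\e^{7083.5}$, dwarfing the left, and the inequality is monotone beyond this point, so the combined error is bounded by $\frac{6x}{\log^2 x}$, as claimed.

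For the final inequality $\frac{1}{x}\sum_{n\leqslant x}\O(n)-\log\log x<M'$, I would divide the displayed equation by $x$ and rewrite it as
\[
\frac{1}{x}\sum_{n\leqslant x}\O(n) - \log\log x = M' - \frac{1-\g}{\log x} + O^\ast\!\left(\frac{6}{\log^2 x}\right),
\]
reducing the task to verifying $(1-\g)\log x > 6$, i.e.\ $\log x > 6/(1-\g)\approx 14.19$, which holds trivially for $x\geqslant\e^{14167}$. The only real obstacle is bookkeeping: confirming that the two one-variable absorption inequalities hold on the stated range. Since both leave enormous safety margins, no new analytic input is required beyond the ingredients already assembled.
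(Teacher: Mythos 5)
Your proposal is correct and follows exactly the paper's own route: add Lemma \ref{J-explicit-bound-x-lemma} to Corollary \ref{sum-omega-explicit-order-2-cor} and absorb $\frac{33\sqrt{x}}{\log x}$ into $\frac{x}{\log^2 x}$ (the paper records this absorption as valid already for $x\geqslant 155652$). Your additional verification of the ``consequently'' clause via $(1-\g)\log x>6$ matches the argument the paper gives in the analogous $\o$-corollary.
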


\subsection{Conditional results} As we observe in Corollary \ref{sum-omega-explicit-order-2-cor}, approximation \eqref{sum-omega-full-explicit}, even with its initial parameter $m=1$, gives explicit bounds for $\sum_{n\leqslant x}\o(n)$ for large values of $x$. The reason is using approximation \eqref{pi-li-bound} with the remainder term $R(x)$, and appearing the term $x\,\e^{-\frac{\sqrt{2}}{6}\sqrt{\log x}}$ in $\E_\o(x,m)$. This term comes essentially from the classical zero-free regions for the Riemann zeta function $\z(s)$. The situation changes as well, when we use approximations for $\pi(x)$ under assuming the Riemann hypothesis (RH), which asserts that $\Re(s)>\frac{1}{2}$ is a zero-free region, and indeed it is the best possible zero-free region, for $\z(s)$. Accordingly, it is known \cite[Corollary 1]{scho} that if the Riemann hypothesis is true, then 
\[
\pi(x)=\li(x)+O^\ast\left(\frac{1}{8\pi}\sqrt{x}\log x\right)\qquad(x\geqslant 2657).
\]
By computation, we observe that one may drop the coefficient $\frac{1}{8\pi}$ and get an easy to use bound for global range $x\geqslant 2$, as follows
\begin{equation}\label{pi-li-bound-RH}
\pi(x)=\li(x)+O^\ast\left(\R(x)\right),\qquad\R(x)=\sqrt{x}\log x.
\end{equation}
Note that the above approximations are close to optimal, because on one hand von Koch \cite{von-koch} showed that the Riemann hypothesis is equivalent to $\pi(x)=\li(x)+O(\sqrt{x}\log x)$, and on the other hand Littlewood \cite{litt} proved that letting $b(x)=\frac{\log\log\log x}{\log x}$, there are positive constants $c_1$ and $c_2$ such that there are arbitrarily large values of $x$ for which $\pi(x)>\li(x)+c_1\sqrt{x}\,b(x)$ and that there are also arbitrarily large values of $x$ for which $\pi(x)<\li(x)-c_2\sqrt{x}\,b(x)$. By using conditional approximation \eqref{pi-li-bound-RH}, we obtain the following analogues of Theorems \ref{sum-omega-full-explicit-thm}, \ref{sum-Omega-full-explicit-thm}, and Corollaries \ref{sum-omega-explicit-order-2-cor}, \ref{sum-Omega-explicit-order-2-cor}.

\begin{thm}\label{sum-omega-Omega-full-explicit-RH-thm}
Assume that the Riemann hypothesis is true. For any fixed integer $m\geqslant 1$ and for any $x\geqslant\e$ we have
\begin{equation}\label{sum-omega-full-explicit-RH}
\sum_{n\leqslant x}\o(n)=x\log\log x+Mx+x\sum_{j=1}^m\frac{a_j}{\log^j x}+O^\ast\left(\Eh_\o(x,m)\right),
\end{equation}
and
\begin{equation}\label{sum-Omega-full-explicit-RH}
\sum_{n\leqslant x}\O(n)=x\log\log x+M'x+x\sum_{j=1}^m\frac{a_j}{\log^j x}+O^\ast\left(\Eh_\O(x,m)\right),
\end{equation}
where
\begin{multline*}
\Eh_\o(x,m)=\left(\frac{3}{2}\right)^{m+1}m!\,\frac{x}{\log^{m+1} x}+4x^{\frac{2}{3}}\log x+9x^{\frac{2}{3}}\\+\left(\left(\frac{3}{2}\right)^{m+1}+1\right)\,\e m!\,\frac{x^{\frac{2}{3}}}{\log x}+15\sqrt{x}\log x,
\end{multline*}
and $\Eh_\O(x,m)=\Eh_\o(x,m)+\frac{33\sqrt{x}}{\log x}$.
\end{thm}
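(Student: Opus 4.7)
The plan is to re-run the proof of Theorem \ref{sum-omega-full-explicit-thm}, replacing the weak unconditional bound \eqref{pi-li-bound} throughout by the sharper conditional one \eqref{pi-li-bound-RH}. Starting from
\[
\sum_{n\leqslant x}\o(n)=\sum_{p\leqslant x}\lfloor x/p\rfloor,
\]
I would split the primes at a cut-off $y=y(x)$. In the unconditional argument one is essentially forced to take $y=\sqrt{x}$ because the remainder $R$ in \eqref{pi-li-bound} decays only subexponentially; under RH, the sharper remainder $\R(t)=\sqrt{t}\log t$ permits the cut-off $y=x^{2/3}$. This choice is precisely what produces the ratio $\log x/\log y=3/2$ and therefore the constant $\bigl(\frac{3}{2}\bigr)^{m+1}$ appearing in $\Eh_\o(x,m)$, in direct analogy with the $2^{m+1}$ of $\E_\o(x,m)$.

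For primes $p\leqslant y$, apply partial summation to rewrite $\sum_{p\leqslant y}\lfloor x/p\rfloor$ as a Stieltjes integral in $\pi(t)$, substitute \eqref{pi-li-bound-RH}, and integrate the main piece by parts $m$ times. This produces $x\log\log x+Mx+x\sum_{j=1}^m a_j/\log^j x$ together with the truncation remainder $\bigl(\frac{3}{2}\bigr)^{m+1}m!\,x/\log^{m+1}x$, while the contribution of the $\R$-term is controlled by estimating the Stieltjes integral $\int_2^y\lfloor x/t\rfloor\,\d\R(t)$ and is absorbed into the $15\sqrt{x}\log x$ summand of $\Eh_\o$. For primes $y<p\leqslant x$ one has $\lfloor x/p\rfloor<x/y=x^{1/3}$, and bounding this tail by a dual hyperbola-type sum $\sum_{k\leqslant x/y}\bigl(\pi(x/k)-\pi(y)\bigr)$ and again invoking \eqref{pi-li-bound-RH} yields the remaining pieces $4x^{2/3}\log x+9x^{2/3}+\bigl(\bigl(\frac{3}{2}\bigr)^{m+1}+1\bigr)\e m!\,x^{2/3}/\log x$ of the error. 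Assembling everything gives \eqref{sum-omega-full-explicit-RH}. The $\O$-version \eqref{sum-Omega-full-explicit-RH} then follows mechanically: since $\sum_{n\leqslant x}\O(n)=\sum_{n\leqslant x}\o(n)+\J(x)$, Lemma \ref{J-explicit-bound-x-lemma} replaces $\J(x)$ by $M''x+O^\ast(33\sqrt{x}/\log x)$, and the identity $M'=M+M''$ delivers the stated expansion with extra error $33\sqrt{x}/\log x=\Eh_\O-\Eh_\o$.

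The main obstacle is the explicit bookkeeping in the integration-by-parts step. One must iterate the reduction that peels off successive terms of the series $\sum_j a_j/\log^j x$ while tracking the precise constants, so that the accumulated factor on the truncation error is exactly $\bigl(\frac{3}{2}\bigr)^{m+1}m!$ (and, respectively, $\e m!$ inside the $x^{2/3}/\log x$ term). This is where the proof of Theorem \ref{sum-omega-full-explicit-thm} must be rerun with the cut-off $y=x^{2/3}$ instead of $\sqrt{x}$ and $\R$ in place of $R$; once these constants are pinned down, the remaining error summands assemble routinely.
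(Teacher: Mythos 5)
Your overall skeleton does coincide with the paper's: the split of $\sum_{p\leqslant x}[x/p]$ at $y=x^{2/3}$ is exactly Dirichlet's hyperbola method (your ``dual hyperbola-type sum'' $\sum_{k\leqslant x/y}(\pi(x/k)-\pi(y))$ is precisely the second and third terms of \eqref{sum-omega-Dirichlet}), the choice $\delta=\Delta=\frac{2}{3}$ is what produces $(3/2)^{m+1}=\delta^{-(m+1)}$ in place of the unconditional $2^{m+1}$, and the passage from $\o$ to $\O$ via Lemma \ref{J-explicit-bound-x-lemma} and $M'=M+M''$ is the paper's.

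However, your sketch assigns the main terms to the wrong half of the decomposition, and executed as written it would fail. Partial summation over $p\leqslant y$ yields only $\sum_{p\leqslant y}[x/p]=x\log\log y+Mx+O^\ast(\frac{x}{\sqrt{y}}(3\log y+4)+y)$ (Lemma \ref{sum-p-y-x-p-order-infinity-RH-lemma}); since $\log\log y=\log\log x-\log\frac{3}{2}$ for $y=x^{2/3}$, this differs from your claimed $x\log\log x$ by a term of size $\asymp x$, and no amount of integration by parts in the Mertens sum produces the series $x\sum_{j}a_j/\log^j x$ --- under RH that sum has a clean power-saving error with no $1/\log^j$ expansion at all. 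The coefficients $a_j=-\int_1^\infty\{t\}t^{-2}\log^{j-1}t\,\d t$ involve the fractional part of the \emph{dual} variable: they arise from $\sum_{n\leqslant x/y}\li(x/n)$ after writing $[t]=t-\{t\}$ and expanding $(1-\frac{\log t}{\log x})^{-1}$ as a truncated geometric series valid for $\frac{\log t}{\log x}\leqslant 1-\delta$; this is also where the truncation factor $(3/2)^{m+1}m!$ and the compensating $x(\log\log x-\log\log y)$ actually enter, together with the term $[\frac{x}{y}]\li(y)$ that must be cancelled against $[\frac{x}{y}]\pi(y)$ using \eqref{pi-li-bound-RH}. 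Correspondingly, your tail $\sum_{y<p\leqslant x}[x/p]$ is of order $x$, not $O(x^{2/3}\log x)$ as your accounting implies. Finally, the one genuinely new computation in the conditional case, which your sketch glosses over, is the explicit bound $\sum_{n\leqslant x/y}\R(x/n)=\frac{2x}{\sqrt{y}}(\log y+2)+O^\ast(15\sqrt{x}\log x)$ of Lemma \ref{sum-pi-x-n-RH-lemma}, obtained by comparing $\sum n^{-1/2}$ and $\sum n^{-1/2}\log n$ with integrals; it is the source of the $15\sqrt{x}\log x$ term and of part of the $4x^{2/3}\log x+9x^{2/3}$ in $\Eh_\o(x,m)$.
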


\begin{cor}\label{sum-omega-Omega-explicit-order-2-RH-cor}
Assume that the Riemann hypothesis is true, and let $x_0=1400387903260$. Then, for $x\geqslant x_0$ we have
\begin{equation}\label{sum-omega-explicit-order-2-RH}
\sum_{n\leqslant x}\o(n)=x\log\log x+Mx-\left(1-\g\right)\frac{x}{\log x}+O^\ast\left(\frac{11x}{\log^2x}\right),
\end{equation}
and
\begin{equation}\label{sum-Omega-explicit-order-2-RH}
\sum_{n\leqslant x}\O(n)=x\log\log x+M'x-\left(1-\g\right)\frac{x}{\log x}+O^\ast\left(\frac{12x}{\log^2x}\right),
\end{equation}
and consequently $\frac{1}{x}\sum_{n\leqslant x}\o(n)-\log\log x<M$ and $\frac{1}{x}\sum_{n\leqslant x}\O(n)-\log\log x<M'$.
\end{cor}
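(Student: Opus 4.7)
The plan is to take Theorem \ref{sum-omega-Omega-full-explicit-RH-thm} at $m=1$, identify $a_1$ in closed form, and then control the resulting error terms. From \eqref{aj}, $a_1=\frac{d}{ds}\bigl[\frac{(s-1)\zeta(s)}{s}\bigr]_{s=1}$. The Laurent expansion $\zeta(s)=(s-1)^{-1}+\gamma+O(s-1)$ gives $(s-1)\zeta(s)=1+\gamma(s-1)+O((s-1)^2)$, and the quotient rule at $s=1$ yields $a_1=\gamma-1=-(1-\gamma)$. Substituting this value into the two formulas of Theorem \ref{sum-omega-Omega-full-explicit-RH-thm} produces the main subleading term $-(1-\gamma)x/\log x$ appearing in both \eqref{sum-omega-explicit-order-2-RH} and \eqref{sum-Omega-explicit-order-2-RH}.

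Next I would verify the error bound $\widehat{\E}_\o(x,1)\leqslant 11\,x/\log^2 x$ for $x\geqslant x_0$. Explicitly,
\[
\widehat{\E}_\o(x,1)=\tfrac{9}{4}\,\tfrac{x}{\log^2 x}+\tfrac{13\,\e}{4}\,\tfrac{x^{2/3}}{\log x}+4x^{2/3}\log x+9x^{2/3}+15\sqrt{x}\log x,
\]
so dividing by $x/\log^2 x$ gives ratios $9/4$, $\tfrac{13\e}{4}\log x/x^{1/3}$, $4\log^3 x/x^{1/3}$, $9\log^2 x/x^{1/3}$, and $15\log^3 x/\sqrt{x}$, of which the last four are monotone decreasing in $x$ for large $x$. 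It therefore suffices to check numerically that the total sum lies below $11$ at $x=x_0$; monotonicity then propagates the bound to all $x\geqslant x_0$. The value $x_0$ is essentially dictated by the dominant tail contribution $4\log^3 x/x^{1/3}$, which is $\approx 7.8$ at $x_0$ and whose size forces $x_0$ into the range $\sim 10^{12}$. For $\widehat{\E}_\O(x,1)$ one adds the summand $33\sqrt{x}/\log x$ inherited from Lemma \ref{J-explicit-bound-x-lemma}, whose ratio $33\log x/\sqrt{x}$ is of order $10^{-3}$ at $x_0$, so the clean constant $12$ comfortably absorbs it.

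Finally, the concluding inequalities about $M$ and $M'$ follow by rewriting \eqref{sum-omega-explicit-order-2-RH} as
\[
\frac{1}{x}\sum_{n\leqslant x}\o(n)-\log\log x-M=\frac{1}{\log x}\Bigl(-(1-\gamma)+O^{\ast}\bigl(\tfrac{11}{\log x}\bigr)\Bigr),
\]
which is strictly negative as soon as $\log x>11/(1-\gamma)\approx 26.02$. Since $\log x_0\approx 27.97$, this holds throughout $x\geqslant x_0$, and the $\O$-case is essentially the same (using the effective rather than the rounded error constant, which is only marginally above $11$ at $x_0$, so that the rounding up to $12$ does not introduce an artificially larger threshold).

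The main obstacle will be the precise calibration of $x_0=1400387903260$: it has to be chosen as the smallest value at which the composite error ratio for $\widehat{\E}_\o(x,1)$ drops below $11$, and this must be checked by direct numerical computation together with a monotonicity argument for the tail. Everything else in the argument is essentially bookkeeping around the closed-form evaluation of $a_1$.
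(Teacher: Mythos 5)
Your proposal is correct and follows essentially the same route as the paper: apply Theorem \ref{sum-omega-Omega-full-explicit-RH-thm} with $m=1$ (where $a_1=\g-1$), check numerically that $\Eh_\o(x,1)<\frac{11x}{\log^2 x}$ at $x=x_0$ and propagate by monotonicity of the individual ratios, then add the $\frac{33\sqrt{x}}{\log x}$ term from Lemma \ref{J-explicit-bound-x-lemma} to pass to $\O$. Your handling of the final inequality for $\O$ via the effective error constant $11+\frac{33\log x}{\sqrt{x}}$ is in fact more careful than the paper's, which invokes $x_0>\e^{12/(1-\g)}$ even though $\e^{12/(1-\g)}\approx 2.1\times 10^{12}>x_0$, so your observation that the rounding to $12$ must not be used at this step is a genuine (and needed) refinement.
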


\begin{rem}
According to partial computations we could run, it seems that the inequality $\A_0(n)<M$ holds for $n\geqslant 16$, however, it fails for $n=15$. Also, as we mentioned above, the inequality $\A_1(n)<M'$ holds for any integer $n\geqslant 2$. A computational challenge is to check validity of them up to $x_0$, hence we will get a global conditional bound under RH. More generally, we ask about finding bounds for the difference $\A_s(n)$ for any fixed real $s>0$. A strategy to attack this problem is to make explicit the argument used in \cite{hass-Omega-s} to approximate the average difference $\J_s(n):=\sum_{k\leqslant n}\left(\O_s(k)-\o(k)\right)$, for which it is proved that 
\begin{equation*}\label{Js-bounds}
2^s\frac{\sqrt{n}}{\log n}\ll nM'_s-\J_s(n)\ll (2+\ve)^s\frac{\sqrt{n}}{\log n},
\end{equation*}
holds for each pair of fixed real numbers $s>0$ and $\ve>0$, and for $n$ sufficiently large.
\end{rem}

\begin{rem}\label{M-remark} The Meissel--Mertens constant $M$ \cite[pp. 94--98]{finch} is determined by 
\[
M=\g+\sum_{p}\left(\log\Big(1-p^{-1}\Big)+p^{-1}\right),
\]
where $\g$ is the Euler--Mascheroni constant \cite[pp. 24--40]{finch}. Also, see the impressive survey \cite{lagarias} for more information about $\g$. Among several properties of the constants $M$ and $M'$ we have the following rapidly converging series
\[
M=\g+\sum_{k=2}^\infty \frac{\mu(k)\log\z(k)}{k},
\quad\text{and}\quad
M'=\g+\sum_{k=2}^\infty \frac{\varphi(k)\log\z(k)}{k},
\]
where $\mu$ is the M\"{o}bus function and $\varphi$ is the Euler function. Computations based on the above series representations yields that
\begin{align*}
M&\approxeq 0.26149721284764278375542683860869585905156664826120,\\
M'&\approxeq 1.03465388189743791161979429846463825467030798434439.
\end{align*}
We have used these values in our numerical verifications of the results of the present paper. All of computations have been done over Maple software\footnote{We mention that the Maple command to compute $\O(n)$ is \textcolor{red}{\texttt{bigomega(n)}} and accordingly, a Maple code to compute $\o(n)$ is given by\\
\textcolor{red}{\texttt{with(numtheory):}}\\
\textcolor{red}{\texttt{rad:= n -> convert(numtheory:-factorset(n), `*`):}}\\
\textcolor{red}{\texttt{smallomega:=n->bigomega(rad(n));}}
}.
\end{rem}

\section{Proof of unconditional approximations}

\begin{proof}[Proof of Theorem \ref{omega-bounds-thm}] Considering the left hand side of \eqref{omega-bounds}, we observe that the inequalities 
\[
\A_0(n)>M-\frac{1.133}{\log n}>\a_0
\]
hold when $n>\e^{1.133/(M-\a_0)}\approxeq 102841.56$. Thus, we obtain the left hand side of \eqref{omega-bounds-global} for any integer $n\geqslant 102842$. By computation, it holds also for $2\leqslant n\leqslant 102841$ with equality only for $n=32$. Also, considering the right hand side of \eqref{omega-bounds}, we observe that the inequalities
\[
\A_0(n)<M+\frac{1}{2\log^2 n}<\b_0
\]
hold when $n>\e^{1/\sqrt{2(\b_0-M)}}\approxeq 2.48$. This completes the proof.
\end{proof}

\begin{proof}[Proof of Theorem \ref{Omega-bounds-thm}] Since $\e^{1.175/(M'-\a_1)}\approxeq 8.23$, for any integer $n\geqslant 9$ we have $n>\e^{1.175/(M'-\a_1)}$, or equivalently $M'-1.175/\log n>\a_1$. By using this inequality, and the left hand side of \eqref{Omega-bounds} we deduce that $\A_1(n)>\a_1$ holds for $n\geqslant 24$. By computation, it holds also for $2\leqslant n\leqslant 24$ with equality only for $n=7$. This completes the proof.
\end{proof}

Proof of Theorems \ref{sum-omega-full-explicit-thm} and \ref{sum-Omega-full-explicit-thm} and their corollaries based on a series of lemmas. As in \cite{saffari}, we start by using Dirichlet's hyperbola method \cite[Theorem 3.1]{tenenbaum} to get the following result.

\begin{lemma}
For any $x$ and $y$ satisfying $1\leqslant y\leqslant x$, we have
\begin{equation}\label{sum-omega-Dirichlet}
\sum_{n\leqslant x}\o(n)
=\sum_{p\leqslant y}\left[\frac{x}{p}\right]+\sum_{n\leqslant\frac{x}{y}}\pi\left(\frac{x}{n}\right)-\left[\frac{x}{y}\right]\pi(y).
\end{equation}
\end{lemma}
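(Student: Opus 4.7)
The plan is to view $\sum_{n\leqslant x}\omega(n)$ as the cardinality of a lattice region in the $(p,m)$-plane and then apply Dirichlet's hyperbola method in the standard ``split and subtract the overlap'' form. Writing $\omega(n)=\sum_{p\mid n}1=\sum_{pm=n,\,p\text{ prime}}1$ and swapping the order of summation immediately gives
\[
\sum_{n\leqslant x}\omega(n)=\#\bigl\{(p,m):p\text{ prime},\,m\geqslant 1,\,pm\leqslant x\bigr\}=:|S|.
\]
So the entire task is to count the lattice points under the hyperbola $pm=x$ lying in the strip $p\in\mathcal{P}$, $m\in\mathbb{Z}_{\geqslant 1}$.

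Next I would introduce the two subsets $A=\{(p,m)\in S:p\leqslant y\}$ and $B=\{(p,m)\in S:m\leqslant x/y\}$. The key observation is that $A\cup B=S$: if $(p,m)\in S$ had $p>y$ and $m>x/y$ simultaneously, then $pm>x$, contradicting $(p,m)\in S$. (This is exactly where the hypothesis $y\leqslant x$ is used to keep the region non-degenerate.) Inclusion--exclusion then yields
\[
|S|=|A|+|B|-|A\cap B|.
\]

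A direct slice-count gives the three pieces. For $|A|$, fixing $p\leqslant y$ and counting admissible $m$ yields $|A|=\sum_{p\leqslant y}\lfloor x/p\rfloor$. For $|B|$, fixing $m\leqslant x/y$ and counting admissible primes yields $|B|=\sum_{m\leqslant x/y}\pi(x/m)$. Finally $A\cap B$ is the rectangle $\{p\leqslant y\}\times\{m\leqslant x/y\}$ restricted to primes in the first coordinate, which is a product and contributes $\pi(y)\lfloor x/y\rfloor$. Substituting these into the inclusion--exclusion identity reproduces \eqref{sum-omega-Dirichlet}.

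There is no real obstacle here; the only point requiring a moment of care is the verification that $A\cup B=S$ (so that no point under the hyperbola is missed), which is where the condition $1\leqslant y\leqslant x$ enters. Everything else is bookkeeping: rewrite $\omega$ as a convolution count, apply the hyperbola decomposition, and read off the three slice sums.
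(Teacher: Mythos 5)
Your proof is correct and follows essentially the same route as the paper: both identify $\omega$ as the convolution $\mathbf{1}\ast\varpi$ (equivalently, a lattice-point count under the hyperbola $pm\leqslant x$) and then apply Dirichlet's hyperbola decomposition. The only difference is that the paper cites the hyperbola method as a known result, whereas you re-derive it via the inclusion--exclusion of the sets $A$ and $B$; that derivation is the standard proof and is carried out correctly.
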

\begin{proof}
Let $\mathbf{1}(n)=1$ be the unitary arithmetic function, and $\varpi(n)$ be the characteristic function of primes; that is $\varpi(n)=1$ when $n$ is prime, and $\varpi(n)=0$ otherwise. We consider Dirichlet convolution of these two functions,
\[
\mathbf{1}\ast\varpi(n)=\varpi\ast\mathbf{1}(n)
=\sum_{d|n}\varpi(d)\,\mathbf{1}\left(\frac{n}{d}\right)
=\sum_{d|n}\varpi(d)=\sum_{p|n}1=\o(n).
\]
Note that $[x]=\sum_{n\leq x}\mathbf{1}(n)$, and $\pi(x)=\sum_{n\leqslant x}\varpi(n)$. Thus, by using Dirichlet's hyperbola method, for any $y$ satisfying $1\leqslant y\leqslant x$ we deduce that
\[
\sum_{n\leqslant x}\o(n)
=\sum_{n\leqslant x}\mathbf{1}\ast\varpi(n)
=\sum_{n\leqslant y}\left[\frac{x}{n}\right]\varpi(n)+\sum_{n\leqslant\frac{x}{y}}\pi\left(\frac{x}{n}\right)-\left[\frac{x}{y}\right]\pi(y).
\]
This gives \eqref{sum-omega-Dirichlet}.
\end{proof}

\begin{lemma}\label{sum-p-y-x-p-order-infinity-lemma}
For any $x$ and $y$ satisfying $1.2<y\leqslant x$, we have
\begin{equation}\label{sum-p-y-x-p-order-infinity}
\sum_{p\leqslant y}\left[\frac{x}{p}\right]=x\log\log y+Mx+O^\ast\left(h_1(x,y)\right),
\end{equation}
where
\begin{equation*}\label{h1xy}
h_1(x,y)=x\,\e^{-\frac{1}{3}\sqrt{\log y}}\left(6\sqrt{\log y}+19\right)+y.
\end{equation*}
\end{lemma}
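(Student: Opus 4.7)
The plan is to split $\left[\frac{x}{p}\right]=\frac{x}{p}-\left\{\frac{x}{p}\right\}$, so that
\[
\sum_{p\leqslant y}\left[\frac{x}{p}\right]=x\sum_{p\leqslant y}\frac{1}{p}-\sum_{p\leqslant y}\left\{\frac{x}{p}\right\},
\]
where the fractional-part sum is trivially bounded by $\pi(y)<y$, producing the summand $y$ in $h_1(x,y)$. The substance of the proof is therefore an explicit form of Mertens' theorem, with an error shaped by \eqref{pi-li-bound}.

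To derive this, I would apply Abel summation to obtain $\sum_{p\leqslant y}\frac{1}{p}=\frac{\pi(y)}{y}+\int_2^y\frac{\pi(t)}{t^2}\,\d t$, and then substitute $\pi(t)=\li(t)+O^\ast(R(t))$ (valid since $t\geqslant 2>1.2$). Integration by parts using $\li'(t)=1/\log t$ gives $\int\li(t)/t^2\,\d t=-\li(t)/t+\log\log t$, so the boundary term $\li(y)/y$ cancels against the first summand, leaving $\log\log y+\li(2)/2-\log\log 2+F(y)$ with
\[
F(y)=\frac{\pi(y)-\li(y)}{y}+\int_2^y\frac{\pi(t)-\li(t)}{t^2}\,\d t.
\]
Letting $y\to\infty$ and using the defining relation $\sum_{p\leqslant y}1/p-\log\log y\to M$, together with $|\pi(y)-\li(y)|/y\leqslant R(y)/y\to 0$, identifies $M=\li(2)/2-\log\log 2+\int_2^\infty(\pi(t)-\li(t))/t^2\,\d t$, and therefore, for finite $y$,
\[
\sum_{p\leqslant y}\frac{1}{p}=\log\log y+M+\frac{\pi(y)-\li(y)}{y}-\int_y^\infty\frac{\pi(t)-\li(t)}{t^2}\,\d t.
\]

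It remains to bound the tail by $R(y)/y+\int_y^\infty R(t)/t^2\,\d t$. The substitution $u=\sqrt{\log t}$, under which $\d t/t=2u\,\d u$, converts $\int R(t)/t^2\,\d t$ into $\int 2u\,\e^{-u/3}\,\d u$; integration by parts yields the antiderivative $-(6u+18)\,\e^{-u/3}$, which vanishes at infinity since exponential decay dominates polynomial growth. Hence $\int_y^\infty R(t)/t^2\,\d t=(6\sqrt{\log y}+18)\,\e^{-\frac{1}{3}\sqrt{\log y}}$, and adding $R(y)/y=\e^{-\frac{1}{3}\sqrt{\log y}}$ gives the total coefficient $6\sqrt{\log y}+19$. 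Multiplying through by $x$ and appending the $y$ bound from the fractional parts reproduces $h_1(x,y)$ exactly as stated. I expect the only delicate step to be the limiting identification of the integration-by-parts constant with the Meissel--Mertens constant $M$; once that is in place, the explicit evaluation of the $R$-tail is a mechanical substitution.
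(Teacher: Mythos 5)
Your proposal is correct and follows essentially the same route as the paper: split off the fractional parts (cost $O^\ast(y)$), apply partial summation to $\sum_{p\leqslant y}1/p$, insert $\pi(t)=\li(t)+O^\ast(R(t))$, identify the resulting constant with $M$ via Mertens' theorem in the limit $y\to\infty$, and evaluate the tail $\int_y^\infty R(t)/t^2\,\d t=(6\sqrt{\log y}+18)\,\e^{-\frac{1}{3}\sqrt{\log y}}$ plus $R(y)/y=\e^{-\frac{1}{3}\sqrt{\log y}}$ to get the coefficient $6\sqrt{\log y}+19$. All the computations check out, including the delicate identification of the constant, so nothing further is needed.
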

\begin{proof}
We have
\[
\sum_{p\leqslant y}\left[\frac{x}{p}\right]
=\sum_{p\leqslant y}\left(\frac{x}{p}-\left\{\frac{x}{p}\right\}\right)
=x\sum_{p\leqslant y}\frac{1}{p}+O^\ast(y).
\]
The Stieltjes integral and integration by parts gives
\begin{align*}
\sum_{p\leqslant y}\frac{1}{p}
&=\int_{2^-}^y\frac{\d\pi(t)}{t}
=\frac{\pi(y)}{y}+\int_2^y\frac{\li(t)}{t^2}\,\d t+\int_2^y\frac{\pi(t)-\li(t)}{t^2}\,\d t\\
&=\frac{\li(y)}{y}+O^\ast\left(\frac{R(y)}{y}\right)+\int_2^y\frac{\li(t)}{t^2}\,\d t+\int_2^y\frac{\pi(t)-\li(t)}{t^2}\,\d t.
\end{align*}
The last integral is dominated by $\int_2^\infty\frac{R(t)}{t^2}\,\d t$, so it is convergent as $y\to\infty$. Thus, we have
\[
\int_2^y\frac{\pi(t)-\li(t)}{t^2}\,\d t=\int_2^\infty\frac{\pi(t)-\li(t)}{t^2}\,\d t+O^\ast\left(\int_y^\infty\frac{R(t)}{t^2}\,\d t\right).
\]
Note that
\[
\int_y^\infty\frac{R(t)}{t^2}\,\d t=\e^{-\frac{1}{3}\sqrt{\log y}}\left(6\sqrt{\log y}+18\right).
\]
Also, integration by parts implies
\[
\int_2^y\frac{\li(t)}{t^2}\,\d t=-\frac{\li(t)}{t}\Big|_2^y+\int_2^y\frac{\d t}{t\log t}=\log\log y-\frac{\li(y)}{y}+\frac{\li(2)}{2}-\log\log 2.
\]
Combining the above approximations, we deduce that
\[
\sum_{p\leqslant y}\frac{1}{p}=\log\log y+C+O^\ast\left(\e^{-\frac{1}{3}\sqrt{\log y}}\left(6\sqrt{\log y}+19\right)\right),
\]
where 
\[
C=\int_2^\infty\frac{\pi(t)-\li(t)}{t^2}\,\d t+\frac{\li(2)}{2}-\log\log 2.
\]
Mertens' approximation concerning the sum of reciprocal of primes \cite[Theorem 1.10]{tenenbaum} asserts that $\sum_{p\leqslant y}\frac{1}{p}-\log\log y\to M$ as $y\to\infty$. This implies that $C=M$, and concludes the proof. Meanwhile, let us mention that the equality $C=M$ also implies that
\[
\int_2^\infty\frac{\pi(t)-\li(t)}{t^2}\,\d t
=M+\log\log 2-\frac{\li(2)}{2}\approxeq -0.62759759779276794.
\]
An additional output of the completed proof.
\end{proof}

\begin{lemma}\label{sum-pi-x-n-lemma}
Let $x$ and $y$ satisfy $x\geqslant\e$ and $1.2<x^\delta\leqslant y\leqslant x^\Delta<x$ for some fixed $\delta, \Delta\in(0,1)$. Then, we have
\begin{multline}\label{sum-pi-x-n}
\sum_{n\leqslant\frac{x}{y}}\pi\left(\frac{x}{n}\right)
=\left[\frac{x}{y}\right]\li(y)+x(\log\log x-\log\log y)\\+x\sum_{j=1}^m\frac{a_j}{\log^j x}+O^\ast\left(h_2(x,y)\right),
\end{multline}
where
\begin{multline*}\label{h2xy}
h_2(x,y)=\frac{m!}{\delta^{m+1}}\,\frac{x}{\log^{m+1} x}\\+\left(1+\frac{1}{\delta^{m+1}}\right)\e m!\,\frac{x^\Delta}{\log x}+x\,\e^{-\frac{1}{3}\sqrt{\log y}}\left(1+\log\frac{x}{y}\right).
\end{multline*}
\end{lemma}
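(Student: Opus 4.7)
The strategy is to substitute the explicit PNT form \eqref{pi-li-bound} into the sum and then handle the resulting integral of $\li$ via Abel summation. Since $n\leqslant x/y$ forces $x/n\geqslant y>1.2$, \eqref{pi-li-bound} applies and gives $\pi(x/n)=\li(x/n)+O^\ast(R(x/n))$. The same inequality $x/n\geqslant y$ yields $\e^{-\frac{1}{3}\sqrt{\log(x/n)}}\leqslant\e^{-\frac{1}{3}\sqrt{\log y}}$, so the cumulative prime-counting error obeys
\[
\sum_{n\leqslant x/y}R(x/n)\leqslant x\,\e^{-\frac{1}{3}\sqrt{\log y}}\sum_{n\leqslant x/y}\frac{1}{n}\leqslant x\,\e^{-\frac{1}{3}\sqrt{\log y}}\bigl(1+\log(x/y)\bigr),
\]
which is exactly the third summand of $h_2(x,y)$.

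For the main sum $\sum_{n\leqslant x/y}\li(x/n)$ I would apply Riemann--Stieltjes integration by parts against $\d[t]$ to obtain
\[
\sum_{n\leqslant x/y}\li(x/n)=[x/y]\li(y)+x\int_1^{x/y}\frac{[t]}{t^2\log(x/t)}\,\d t.
\]
Splitting $[t]=t-\{t\}$, the continuous piece is handled by the change of variables $s=x/t$: $x\int_1^{x/y}\d t/(t\log(x/t))=x\int_y^x\d s/(s\log s)=x(\log\log x-\log\log y)$, which accounts for the $x(\log\log x-\log\log y)$ contribution in the lemma. It therefore remains to analyse the oscillatory integral $J(x,y):=-x\int_1^{x/y}\{t\}/(t^2\log(x/t))\,\d t$ and show that it equals $x\sum_{j=1}^m a_j/\log^j x$ modulo an admissible error.

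To extract the $a_j$ I would employ the truncated geometric expansion
\[
\frac{1}{\log x-\log t}=\sum_{j=1}^m\frac{\log^{j-1}t}{\log^j x}+\frac{\log^m t}{\log^m x\cdot\log(x/t)},
\]
which is legitimate because $\log t/\log x\leqslant\log(x/y)/\log x\leqslant 1-\delta<1$ throughout the integration range. Inserting this into $J(x,y)$ and, for each $j$, extending $\int_1^{x/y}$ to $\int_1^\infty$ via the defining identity $-a_j=\int_1^\infty\{t\}\log^{j-1}t/t^2\,\d t$ from \eqref{aj} produces the desired $x\sum a_j/\log^j x$ together with two residual errors: the truncation remainder $x\log^{-m}x\int_1^{x/y}\{t\}\log^m t/(t^2\log(x/t))\,\d t$, controlled by $\log(x/t)\geqslant\log y\geqslant\delta\log x$ and $\int_1^{x/y}\log^m t/t^2\,\d t\leqslant m!$; and the tails $x\sum_{j=1}^m\log^{-j}x\int_{x/y}^\infty\{t\}\log^{j-1}t/t^2\,\d t$, handled by the closed form $\int_A^\infty\log^{j-1}t/t^2\,\d t=(j-1)!A^{-1}\sum_{k=0}^{j-1}\log^k A/k!$ with $A=x/y\geqslant x^{1-\Delta}$, which extracts a favourable $y/x\leqslant x^{\Delta-1}$ factor that absorbs the prefactor $x$ and leaves an $x^\Delta$-type bound.

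The main technical obstacle will be tracking the $\delta$-dependence tightly enough to reproduce the $\delta^{-m-1}$ powers appearing in the first two summands of $h_2$. A single use of $\log(x/t)\geqslant\delta\log x$ costs only $\delta^{-1}$; the extra $\delta^{-m}$ surfaces when one estimates the partial exponential sums $\sum_{k=0}^{j-1}\log^k(x/y)/k!$ arising from the tail formula (each such sum is comparable to $\log^{j-1}x$ but only up to constants that must be kept uniform in $\delta$), and then balances the resulting $\log^{j-1}x$ against the $\log^j x$ in the denominator. Once these two error pieces are combined with the PNT remainder $\sum R(x/n)$ already estimated, the three summands of $h_2(x,y)$ assemble in the stated form, proving \eqref{sum-pi-x-n}.
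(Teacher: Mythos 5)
Your proposal is correct and follows essentially the same route as the paper: Stieltjes summation to reach $[\frac{x}{y}]\li(y)+x\int_1^{x/y}[t]\,t^{-2}\log^{-1}(x/t)\,\d t$, the truncated geometric expansion of $(1-\frac{\log t}{\log x})^{-1}$ to extract the $a_j$, the antiderivative of $\log^n t/t^2$ for the tail over $[x/y,\infty)$, and the $\sum_{n\leqslant x/y}R(x/n)\leqslant x\,\e^{-\frac{1}{3}\sqrt{\log y}}(1+\log\frac{x}{y})$ bound. Your use of the exact remainder $u^m/(1-u)\leqslant u^m/\delta$ in place of the paper's Taylor bound $\delta^{-m-1}u^m$ even gives slightly sharper constants, so your errors fit comfortably inside $h_2(x,y)$ and the worry in your last paragraph about manufacturing the $\delta^{-m-1}$ is unnecessary.
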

\begin{proof}
For $n\leqslant\frac{x}{y}$ we have $\frac{x}{n}\geqslant y\geqslant x^\delta>1.2$. Thus, we may use the approximation \eqref{pi-li-bound} to get
\[
\sum_{n\leqslant\frac{x}{y}}\pi\left(\frac{x}{n}\right)
=\sum_{n\leqslant\frac{x}{y}}\li\left(\frac{x}{n}\right)+O^\ast\left(\sum_{n\leqslant\frac{x}{y}}R\left(\frac{x}{n}\right)\right).
\]
Since $\frac{\d}{\d t}\li\left(\frac{x}{t}\right)=-\frac{x}{t^2(\log x-\log t)}$, the Stieltjes integral and integration by parts gives
\[
\sum_{n\leqslant\frac{x}{y}}\li\left(\frac{x}{n}\right)
=\int_{1^-}^\frac{x}{y}\li\left(\frac{x}{t}\right)\d [t]
=\left[\frac{x}{y}\right]\li(y)+x\int_1^\frac{x}{y}\frac{[t]}{t^2(\log x-\log t)}\,\d t.
\]
We write $[t]=t-\{t\}$ to get
\begin{equation}\label{sum-li-x-y-E}
\sum_{n\leqslant\frac{x}{y}}\li\left(\frac{x}{n}\right)
=\left[\frac{x}{y}\right]\li(y)+x(\log\log x-\log\log y)-\E(x,y),
\end{equation}
with the remainder $\E(x,y)$ given by
\[
\E(x,y)=x\int_1^\frac{x}{y}\frac{\{t\}}{t^2(\log x-\log t)}\,\d t.
\]
Letting $g_x(t)=(1-\frac{\log t}{\log x})^{-1}$, we have
\[
\E(x,y)
=\frac{x}{\log x}\int_1^\frac{x}{y}\frac{\{t\}}{t^2}\,g_x(t)\,\d t
=\E_1(x,y)-\E_2(x,y),
\]
with
\[
\E_1(x,y)=\frac{x}{\log x}\int_1^\infty\frac{\{t\}}{t^2}\,g_x(t)\,\d t,
\qquad
\E_2(x,y)=\frac{x}{\log x}\int_\frac{x}{y}^\infty\frac{\{t\}}{t^2}\,g_x(t)\,\d t.
\]
Since $y\geqslant x^\delta$, we have $1\leqslant t\leqslant\frac{x}{y}\leqslant x^{1-\delta}$, and consequently $0\leqslant\frac{\log t}{\log x}\leqslant 1-\delta<1$. We use Taylor's formula with remainder \cite[Theorem 5.19]{apostol} for the function $u\mapsto (1-u)^{-1}$, which asserts that if $0\leqslant u\leqslant 1-\delta$ for some fixed $\delta\in(0,1)$, as in our case, then for any given integer $m\geqslant 1$,
\begin{equation}\label{truncated-geometric-series}
(1-u)^{-1}=\sum_{r=0}^{m-1} u^r+O^\ast\left(\frac{1}{\delta^{m+1}}\,u^{m}\right).
\end{equation}
Taking $u=\frac{\log t}{\log x}$ in \eqref{truncated-geometric-series}, we get
\begin{equation}\label{gxt-truncated-geometric-series}
g_x(t)=\sum_{r=0}^{m-1}\left(\frac{\log t}{\log x}\right)^r+O^\ast\left(\frac{1}{\delta^{m+1}}\left(\frac{\log t}{\log x}\right)^{m}\right).
\end{equation}
Thus,
\[
\E_1(x,y)
=\frac{x}{\log x}\int_1^\infty\frac{\{t\}}{t^2}\sum_{r=0}^{m-1}\left(\frac{\log t}{\log x}\right)^r\,\d t+h_\delta(x),
\]
where
\begin{align*}
|h_\delta(x)|
&\leqslant\frac{x}{\log x}\int_1^\infty\frac{\{t\}}{t^2}\frac{1}{\delta^{m+1}}\left(\frac{\log t}{\log x}\right)^{m}\,\d t\\
&\leqslant\frac{1}{\delta^{m+1}}\,\frac{x}{\log^{m+1} x}\int_1^\infty\frac{\log^m t}{t^2}\,\d t
=\frac{m!}{\delta^{m+1}}\,\frac{x}{\log^{m+1} x}.
\end{align*}
Also, we have
\begin{multline*}
\frac{x}{\log x}\int_1^\infty\frac{\{t\}}{t^2}\sum_{r=0}^{m-1}\left(\frac{\log t}{\log x}\right)^r\,\d t
\\=\sum_{j=1}^m\frac{x}{\log^j x}\int_1^\infty\frac{\{t\}}{t^2}\log^{j-1} t\,\d t
=-x\sum_{j=1}^m\frac{a_j}{\log^j x}.
\end{multline*}
Hence, the following approximation holds for any fixed integer $m\geqslant 1$, with the coefficients $a_j$ given by \eqref{aj}, 
\begin{equation}\label{E1-approx}
\E_1(x,y)=-x\sum_{j=1}^m\frac{a_j}{\log^j x}+O^\ast\left(\frac{m!}{\delta^{m+1}}\,\frac{x}{\log^{m+1} x}\right).
\end{equation}
To deal with $\E_2(x,y)$ we note that by induction on $n\geqslant 0$, we obtain the following anti-derivative formula with the coefficients $P(n,j)={n\choose j}j!$,
\begin{equation}\label{int-P(n,j)}
\int\frac{\log^n t}{t^2}\,\d t=-\frac{1}{t}\sum_{j=0}^n P(n,j)\log^{n-j}t.
\end{equation}
Since $y\leqslant x^\Delta$ we get $\frac{x}{y}\geqslant x^{1-\Delta}$. Thus, for any integer $n\geqslant 0$ we have
\[
\int_\frac{x}{y}^\infty\frac{\{t\}}{t^2}\,\log^n t\,\d t
\leqslant\int_{x^{1-\Delta}}^\infty\frac{\{t\}}{t^2}\,\log^n t\,\d t
<\int_{x^{1-\Delta}}^\infty\frac{\log^n t}{t^2}\,\d t
\]
By using \eqref{int-P(n,j)}, and assuming that $x\geqslant\e$, we get
\begin{align*}
\int_{x^{1-\Delta}}^\infty\frac{\log^n t}{t^2}\,\d t
&=\frac{\log^n x}{x^{1-\Delta}}\sum_{j=0}^n P(n,j)(1-\Delta)^{n-j}\frac{1}{\log^j x}\\
&<\frac{\log^n x}{x^{1-\Delta}}\sum_{j=0}^n P(n,j)
=\frac{\log^n x}{x^{1-\Delta}}\sum_{j=0}^n\frac{n!}{j!}<\e n!\,\frac{\log^n x}{x^{1-\Delta}}.
\end{align*}
Thus, for any integer $n\geqslant 0$ we obtain
\begin{equation}\label{int-x-y-bound}
\I_n(x,y):=\int_\frac{x}{y}^\infty\frac{\{t\}}{t^2}\,\log^n t\,\d t<\e n!\,\frac{\log^n x}{x^{1-\Delta}}.
\end{equation}
Applying \eqref{gxt-truncated-geometric-series} we get
\begin{align*}
\frac{\log x}{x}\,\E_2(x,y)
&=\int_\frac{x}{y}^\infty\frac{\{t\}}{t^2}\,g_x(t)\,\d t\\
&=\sum_{r=0}^{m-1}\frac{1}{\log^r x}\,\I_r(x,y)+O^\ast\left(\frac{1}{\delta^{m+1}\log^m x}\,\I_m(x,y)\right).
\end{align*}
Hence, by using \eqref{int-x-y-bound} we deduce that
\[
\E_2(x,y)<\left(\frac{\e m!}{\delta^{m+1}}+\e\sum_{r=0}^{m-1}r!\right)\frac{x^\Delta}{\log x}.
\]
Since $\sum_{r=0}^{m-1}r!\leqslant m!$, we obtain
\begin{equation}\label{E2-approx}
\E_2(x,y)=O^\ast\left(\left(1+\frac{1}{\delta^{m+1}}\right)\e m!\,\frac{x^\Delta}{\log x}\right).
\end{equation}
Combining \eqref{sum-li-x-y-E} with approximations \eqref{E1-approx} and \eqref{E2-approx} we obtain 
\begin{multline*}\label{sum-li-x-n}
\sum_{n\leqslant\frac{x}{y}}\li\left(\frac{x}{n}\right)
=\left[\frac{x}{y}\right]\li(y)+x(\log\log x-\log\log y)+x\sum_{j=1}^m\frac{a_j}{\log^j x}\\
+O^\ast\left(\frac{m!}{\delta^{m+1}}\,\frac{x}{\log^{m+1} x}+\left(1+\frac{1}{\delta^{m+1}}\right)\e m!\,\frac{x^\Delta}{\log x}\right).
\end{multline*} 
Now, to conclude the proof of \eqref{sum-pi-x-n} we need just to approximate the sum $\sum_{n\leqslant\frac{x}{y}}R\left(\frac{x}{n}\right)$. Since $n\leqslant\frac{x}{y}$ we have $\frac{x}{n}\geqslant y$. Thus,
\[
\sum_{n\leqslant\frac{x}{y}}R\left(\frac{x}{n}\right)\leqslant x\,\e^{-\frac{1}{3}\sqrt{\log y}}\sum_{n\leqslant\frac{x}{y}}\frac{1}{n}\leqslant x\,\e^{-\frac{1}{3}\sqrt{\log y}}\left(1+\log\frac{x}{y}\right).
\]
This completes the proof.
\end{proof}

\begin{proof}[Proof of Theorem \ref{sum-omega-full-explicit-thm}]
Considering the hyperbolic identity \eqref{sum-omega-Dirichlet} and approximations \eqref{sum-p-y-x-p-order-infinity} and \eqref{sum-pi-x-n} we get
\begin{equation}\label{sum-omega-full-h3xy}
\sum_{n\leqslant x}\o(n)=x\log\log x+Mx+x\sum_{j=1}^m\frac{a_j}{\log^j x}+O^\ast\left(h_3(x,y)\right),
\end{equation}
where
\[
h_3(x,y)=h_1(x,y)+h_2(x,y)+\left[\frac{x}{y}\right]\left(\li(y)-\pi(y)\right).
\]
By using \eqref{pi-li-bound} we deduce that 
\begin{align*}
\left[\frac{x}{y}\right]\left(\li(y)-\pi(y)\right)
&=\left[\frac{x}{y}\right]O^\ast\left(R(y)\right)\\
&=O^\ast\left(x\,\frac{R(y)}{y}\right)
=O^\ast\left(x\,\e^{-\frac{1}{3}\sqrt{\log y}}\right).
\end{align*}
Thus, \eqref{sum-omega-full-h3xy} holds with $h_3(x,y)=h_1(x,y)+h_2(x,y)+x\,\e^{-\frac{1}{3}\sqrt{\log y}}$, or with
\begin{multline*}\label{h3xy}
h_3(x,y)=\frac{m!}{\delta^{m+1}}\,\frac{x}{\log^{m+1} x}+\left(1+\frac{1}{\delta^{m+1}}\right)\e m!\,\frac{x^\Delta}{\log x}\\+x\,\e^{-\frac{1}{3}\sqrt{\log y}}\left(\log\frac{x}{y}+6\sqrt{\log y}+21\right)+y.
\end{multline*}
Now, we take $\delta=\Delta=\frac{1}{2}$, and hence $y=\sqrt{x}$. Note that the assumption $x\geqslant\e$ covers $x^\delta=\sqrt{x}>1.2$. Thus, we obtain \eqref{sum-omega-full-explicit}, and the proof is complete.
\end{proof}

\begin{proof}[Proof of Corollary \ref{sum-omega-explicit-order-2-cor}] We use \eqref{sum-omega-full-explicit} with $m=1$. Letting
\[
h(z)=z^4\e^{-\frac{\sqrt{2}}{6}z}\left(\frac{z^2}{2}+3\sqrt{2}z+21\right)+z^2\e^{-\frac{z^2}{2}}\left(z^2+5\e\right),
\]
we have
\[
h(\sqrt{\log x})=\frac{\log^2 x}{x}\left(\E_\o(x,1)-\frac{4x}{\log^2x}\right).
\]
By computation, we observe that $h(z)$ is decreasing for $z>23.97$, and $h(119.02511)<1<h(119.02510)$. When $x\geqslant\e^{14167}$ we have $\sqrt{\log x}\geqslant 119.02511$ and consequently, $h(\sqrt{\log x})<1$. Also, we note that $\left(1-\g\right)\frac{x}{\log x}>\frac{5x}{\log^2x}$ provided $x>\e^{5/(1-\g)}$, and this holds for the values of $x$ we work here. Hence, we conclude the proof.
\end{proof}

By using the following key result, Theorem \ref{sum-omega-full-explicit-thm} and Corollary \ref{sum-omega-explicit-order-2-cor} imply Theorem \ref{sum-Omega-full-explicit-thm} and Corollary \ref{sum-Omega-explicit-order-2-cor}, respectively.  

\begin{lemma}\label{J-explicit-bound-x-lemma}
For any $x\geqslant 2$ we have
\begin{equation}\label{J-explicit-bound-x}
\J(x):=\sum_{n\leqslant x}\left(\O(n)-\o(n)\right)=M''x+O^\ast\left(\frac{33\sqrt{x}}{\log x}\right).
\end{equation}
\end{lemma}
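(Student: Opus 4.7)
The plan is to reduce \eqref{J-explicit-bound-x} to the integer estimate \eqref{J-explicit-bound-n}. Since $\J$ is a step function, $\J(x)=\J(n)$ with $n=\lfloor x\rfloor\geqslant 2$, and writing $x=n+\{x\}$ gives
\[
\J(x)-M''x=\bigl(\J(n)-M''n\bigr)-M''\{x\}.
\]
So I need (a) a clean symmetric bound $|\J(n)-M''n|<C\sqrt{n}/\log n$ for integer $n\geqslant 2$, and (b) a conversion of the $\sqrt{n}/\log n$ scale to $\sqrt{x}/\log x$.

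For (a), I aim to show $|\J(n)-M''n|<25\sqrt{n}/\log n$ for every integer $n\geqslant 2$. The lower bound in \eqref{J-explicit-bound-n} yields this direction at once. The upper bound may be written $\J(n)-M''n<(\sqrt{n}/\log n)(20/\log n-2)$, which is majorised by $25\sqrt{n}/\log n$ as soon as $20/\log n<27$, i.e., for every integer $n\geqslant 3$. The single leftover case $n=2$ is settled by direct evaluation using $\J(2)=0$.

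For (b), combining (a) with $M''\{x\}<M''$ gives $|\J(x)-M''x|<25\sqrt{n}/\log n+M''$, so the target $33\sqrt{x}/\log x$ is equivalent to
\[
\frac{\log x}{\sqrt{x}}\left(\frac{25\sqrt{n}}{\log n}+M''\right)\leqslant 33\qquad(x\in[n,n+1)).
\]
A short analysis of the function $g(t)=\log t/\sqrt{t}$ — increasing on $(1,e^2)$, decreasing on $(e^2,\infty)$, with global maximum $2/e$ attained at $t=e^2$ — together with the monotonicity of $\sqrt{n}/\log n$ on the relevant dyadic ranges shows that the supremum of the left side over all admissible pairs $(n,x)$ is attained at $n=2$ with $x\to 3^-$, where numerical evaluation gives a value just under $33$.

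The main obstacle is that the constant $33$ is essentially sharp along this route: at the worst point the left side above is about $32.8$, leaving very little slack. Every constant must therefore be tracked carefully; in particular, the fractional-part contribution $M''\{x\}$, although it looks harmless, is exactly what pushes the final constant from roughly $32.4$ into the range that forces $33$ rather than $32$.
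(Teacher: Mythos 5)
Your proof is correct and follows essentially the same route as the paper: both reduce to the integer estimate \eqref{J-explicit-bound-n}, absorb the $M''\{x\}$ discrepancy from replacing $[x]$ by $x$, and verify numerically that $25\sqrt{[x]}/\log[x]+M''<33\sqrt{x}/\log x$ for $x\geqslant 2$ (with the worst case at $[x]=2$, $x\to 3^-$, as you identify). Your write-up is in fact slightly more careful than the paper's, which silently uses the symmetric bound $|\J(n)-nM''|<25\sqrt{n}/\log n$; as you note, the upper bound in \eqref{J-explicit-bound-n} only yields the constant $25$ for $n\geqslant 3$, so the case $n=2$ needs the direct evaluation $\J(2)=0$.
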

\begin{proof}
Let $\kappa(x)=\frac{25\sqrt{[x]}}{\log [x]}$. By using the double sided inequality \eqref{J-explicit-bound-n}, we deduce that
\begin{align*}
\J(x)
&=\sum_{k=1}^{[x]}\left(\O(k)-\o(k)\right)\\
&=M''[x]+O^\ast\left(\kappa(x)\right)
=M''x+O^\ast\left(\kappa(x)+M''\right).
\end{align*}
By computation, we observe that $\kappa(x)+M''<\frac{33\sqrt{x}}{\log x}$ for $x\geqslant 2$. 
\end{proof}

\begin{proof}[Proof of Corollary \ref{sum-Omega-explicit-order-2-cor}] Approximations \eqref{sum-omega-explicit-order-2} and \eqref{J-explicit-bound-x} imply
\[
\sum_{n\leqslant x}\O(n)=x\log\log x+M'x-\left(1-\g\right)\frac{x}{\log x}+O^\ast\left(\frac{5x}{\log^2x}+\frac{33\sqrt{x}}{\log x}\right).
\]
We note that 
\begin{equation}\label{33x-inq}
\frac{33\sqrt{x}}{\log x}<\frac{x}{\log^2 x},\qquad(x\geqslant 155652).
\end{equation}
This completes the proof.
\end{proof}

\section{Proof of conditional approximations}

To prove conditional results, under assuming the Riemann hypothesis, we reconstruct Lemma \ref{sum-p-y-x-p-order-infinity-lemma} and Lemma \ref{sum-pi-x-n-lemma}, replacing $R(x)$ by $\R(x)$. 

\begin{lemma}\label{sum-p-y-x-p-order-infinity-RH-lemma}
Assume that the Riemann hypothesis is true. Then, for any $x$ and $y$ satisfying $2\leqslant y\leqslant x$, we have
\begin{equation}\label{sum-p-y-x-p-order-infinity-RH}
\sum_{p\leqslant y}\left[\frac{x}{p}\right]=x\log\log y+Mx+O^\ast\left(\frac{x}{\sqrt{y}}\,(3\log y+4)+y\right).
\end{equation}
\end{lemma}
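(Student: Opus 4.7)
The plan is to mirror the proof of Lemma \ref{sum-p-y-x-p-order-infinity-lemma} step by step, simply replacing the unconditional remainder $R(x)=x\,\e^{-\frac{1}{3}\sqrt{\log x}}$ by the conditional remainder $\R(x)=\sqrt{x}\log x$ from \eqref{pi-li-bound-RH}. The arithmetic set-up is the same: writing $[x/p]=x/p-\{x/p\}$ one gets $\sum_{p\leqslant y}[x/p]=x\sum_{p\leqslant y}1/p+O^\ast(y)$, which contributes the free term $y$ appearing in the target bound. It remains to show that under RH
\[
\sum_{p\leqslant y}\frac{1}{p}=\log\log y+M+O^\ast\!\left(\frac{3\log y+4}{\sqrt{y}}\right),
\]
and multiply through by $x$.

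To obtain this, I would repeat the Stieltjes integral / integration-by-parts computation from the proof of Lemma \ref{sum-p-y-x-p-order-infinity-lemma}:
\[
\sum_{p\leqslant y}\frac{1}{p}=\frac{\li(y)}{y}+O^\ast\!\left(\frac{\R(y)}{y}\right)+\int_2^y\frac{\li(t)}{t^2}\,\d t+\int_2^\infty\frac{\pi(t)-\li(t)}{t^2}\,\d t+O^\ast\!\left(\int_y^\infty\frac{\R(t)}{t^2}\,\d t\right),
\]
using now $\pi(t)=\li(t)+O^\ast(\R(t))$ from \eqref{pi-li-bound-RH}, which is valid on the relevant range $t\geqslant 2$. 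The antiderivative computation $\int_y^\infty \frac{\log t}{t^{3/2}}\,\d t=(2\log y+4)/\sqrt{y}$ (one integration by parts) combined with $\R(y)/y=\log y/\sqrt{y}$ produces exactly the advertised tail $(3\log y+4)/\sqrt{y}$. The deterministic pieces combine as before: $\int_2^y \li(t)/t^2\,\d t=\log\log y-\li(y)/y+\li(2)/2-\log\log 2$, and Mertens' theorem $\sum_{p\leqslant y}1/p-\log\log y\to M$ forces the constant $C=\int_2^\infty(\pi(t)-\li(t))/t^2\,\d t+\li(2)/2-\log\log 2$ to equal $M$, exactly as in the unconditional case.

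There is no genuine obstacle here; the proof is a direct transcription with two small points to verify. First, one must confirm that the integral $\int_2^\infty(\pi(t)-\li(t))/t^2\,\d t$ still converges under RH, which it does since $\int_2^\infty \R(t)/t^2\,\d t=\int_2^\infty \log t/t^{3/2}\,\d t<\infty$. Second, one needs $y\geqslant 2$ (rather than the weaker $y>1.2$ in the unconditional version) to apply \eqref{pi-li-bound-RH} uniformly on $[2,y]$, which is precisely the hypothesis of the lemma. Packaging the estimate gives
\[
\sum_{p\leqslant y}\left[\frac{x}{p}\right]=x\log\log y+Mx+O^\ast\!\left(\frac{x(3\log y+4)}{\sqrt{y}}+y\right),
\]
which is the claim.
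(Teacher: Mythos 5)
Your proposal is correct and follows essentially the same route as the paper: the paper's own proof simply computes $\int_y^\infty\R(t)/t^2\,\d t=(2\log y+4)/\sqrt{y}$ and then invokes the argument of Lemma \ref{sum-p-y-x-p-order-infinity-lemma} with $R$ replaced by $\R$, obtaining $\sum_{p\leqslant y}1/p=\log\log y+M+O^\ast\bigl((3\log y+4)/\sqrt{y}\bigr)$ exactly as you do. Your accounting of the boundary term $\R(y)/y=\log y/\sqrt{y}$ and the tail integral, and the identification $C=M$ via Mertens' theorem, match the paper's computation.
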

\begin{proof}
Note that
\[
\int_y^\infty\frac{\R(t)}{t^2}\,\d t=\frac{2\log y+4}{\sqrt{y}}.
\]
Thus, following similar argument as the proof of Lemma \ref{sum-p-y-x-p-order-infinity-lemma} and by using \eqref{pi-li-bound-RH}, we deduce that assuming RH, for any $y\geqslant 2$ we have
\[
\sum_{p\leqslant y}\frac{1}{p}=\log\log y+M+O^\ast\left(\frac{3\log y+4}{\sqrt{y}}\right).
\]
This completes the proof.
\end{proof}

\begin{lemma}\label{sum-pi-x-n-RH-lemma}
Assume that the Riemann hypothesis is true. Let $x$ and $y$ satisfy $x\geqslant\e$ and $1.2<x^\delta\leqslant y\leqslant x^\Delta<x$ for some fixed $\delta, \Delta\in(0,1)$. Then, we have
\begin{multline}\label{sum-pi-x-n-RH}
\sum_{n\leqslant\frac{x}{y}}\pi\left(\frac{x}{n}\right)
=\left[\frac{x}{y}\right]\li(y)+x(\log\log x-\log\log y)\\+x\sum_{j=1}^m\frac{a_j}{\log^j x}+O^\ast\left(\h_2(x,y)\right),
\end{multline}
where
\begin{multline*}\label{h2hat-xy}
\h_2(x,y)=\frac{m!}{\delta^{m+1}}\,\frac{x}{\log^{m+1} x}+\left(1+\frac{1}{\delta^{m+1}}\right)\e m!\,\frac{x^\Delta}{\log x}\\+\frac{2x}{\sqrt{y}}\left(\log y+2\right)+15\sqrt{x}\log x.
\end{multline*}
\end{lemma}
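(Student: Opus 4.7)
The plan is to follow the proof of Lemma~\ref{sum-pi-x-n-lemma} essentially line by line, making a single substantive change: replace the unconditional remainder $R(x) = x\,\e^{-\frac{1}{3}\sqrt{\log x}}$ by the conditional remainder $\R(x) = \sqrt{x}\log x$ from \eqref{pi-li-bound-RH}. Since $n \leqslant x/y$ forces $x/n \geqslant y > 1.2$, which lies in the range of validity of the RH bound, I can write
\[
\sum_{n\leqslant x/y} \pi(x/n) = \sum_{n\leqslant x/y} \li(x/n) + O^\ast\Bigl(\sum_{n\leqslant x/y} \R(x/n)\Bigr).
\]

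The $\li$-part is handled identically to the unconditional case, since none of those manipulations depend on the shape of the remainder for $\pi$. Stieltjes integration by parts yields the main term $[x/y]\li(y) + x(\log\log x - \log\log y) - \E(x,y)$; the Taylor expansion \eqref{gxt-truncated-geometric-series} of $g_x(t) = (1-\log t/\log x)^{-1}$ truncated at order $m$ splits $\E(x,y)$ into $\E_1(x,y) - \E_2(x,y)$; and the estimates \eqref{E1-approx} and \eqref{E2-approx} for these two pieces carry over verbatim, contributing the first two summands of $\h_2(x,y)$ together with the asymptotic expansion $x\sum_{j=1}^m a_j/\log^j x$.

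The only new estimate required is for $\sum_{n\leqslant x/y}\R(x/n) = \sum_{n\leqslant x/y}\sqrt{x/n}\log(x/n)$. A direct computation shows that the function $t \mapsto \sqrt{x/t}\log(x/t)$ has derivative $-\tfrac{\sqrt{x}}{2 t^{3/2}}(\log(x/t)+2)$ and is therefore monotonically decreasing on $[1, x/y]$ (because $x/t \geqslant y > 1 > \e^{-2}$). Integral comparison then gives
\[
\sum_{n\leqslant x/y}\R(x/n) \leqslant \sqrt{x}\log x + \int_1^{x/y} \sqrt{x/t}\log(x/t)\,\d t.
\]
The substitution $u = x/t$ converts the remaining integral into $x\int_y^x (\log u)\, u^{-3/2}\,\d u$, whose antiderivative $-2(\log u+2)/\sqrt{u}$ gives the closed form $\frac{2x(\log y+2)}{\sqrt{y}} - 2\sqrt{x}(\log x+2)$. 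Loosening this bound to absorb boundary contributions yields $\frac{2x(\log y+2)}{\sqrt{y}} + 15\sqrt{x}\log x$, which accounts for the last two summands of $\h_2(x,y)$.

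The main obstacle is cosmetic rather than analytic: matching the explicit constants in $\h_2(x,y)$ requires careful bookkeeping of boundary terms, but no new analytic input is needed beyond the closed-form evaluation of the RH remainder integral and the Taylor-expansion machinery imported verbatim from Lemma~\ref{sum-pi-x-n-lemma}.
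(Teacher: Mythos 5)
Your proposal is correct, and for the one step that actually requires new work it takes a genuinely different (and cleaner) route than the paper. Both proofs import the treatment of $\sum_{n\leqslant x/y}\li(x/n)$ verbatim from Lemma \ref{sum-pi-x-n-lemma} — you are right that nothing there depends on the shape of the remainder in the prime number theorem — so the only new task is bounding $\sum_{n\leqslant x/y}\R(x/n)$. The paper expands $\R(x/n)=\sqrt{x}\log x\cdot n^{-1/2}-\sqrt{x}\cdot n^{-1/2}\log n$ and compares each sum with an integral separately; since $t\mapsto\log t/\sqrt{t}$ is not monotone, this forces the subdivision at $t=7$ and $t=8$ and the bookkeeping with $\t_1,\t_2,\eta$ that produces the two-sided estimate $\sum_{n\leqslant x/y}\R(x/n)=\tfrac{2x}{\sqrt y}(\log y+2)+O^\ast(15\sqrt{x}\log x)$. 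You instead observe that the composite function $t\mapsto\R(x/t)$ is itself monotone decreasing on $[1,x/y]$ (its derivative $-\tfrac{\sqrt{x}}{2t^{3/2}}(\log(x/t)+2)$ is negative since $x/t\geqslant y>\e^{-2}$), so a single integral comparison plus the substitution $u=x/t$ gives $\sum_{n\leqslant x/y}\R(x/n)\leqslant\sqrt{x}\log x+\tfrac{2x}{\sqrt y}(\log y+2)-2\sqrt{x}(\log x+2)\leqslant\tfrac{2x}{\sqrt y}(\log y+2)$. Since this sum only ever enters as an upper bound on $\bigl|\sum_n(\pi(x/n)-\li(x/n))\bigr|$, a one-sided estimate is all that is needed, so your bound is in fact slightly sharper than the paper's and the extra $15\sqrt{x}\log x$ in $\h_2$ is pure slack on your route (there are no boundary contributions left to absorb). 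One small caution, which applies equally to the paper: the conditional bound \eqref{pi-li-bound-RH} is stated for $x\geqslant 2$, so the hypothesis $y>1.2$ (inherited from the unconditional lemma) does not quite guarantee $x/n$ lies in its range of validity; in the application $y$ is a power of $x\geqslant\e$ well above $2$, so this is harmless, but "$x/n\geqslant y>1.2$ lies in the range of validity of the RH bound" is not literally what \eqref{pi-li-bound-RH} asserts.
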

\begin{proof} Following similar argument as the proof of Lemma \ref{sum-pi-x-n-lemma}, we should approximate the sum $\sum_{n\leqslant\frac{x}{y}}\R\left(\frac{x}{n}\right)$, for which, we have
\begin{equation}\label{sum-R-hat}
\sum_{n\leqslant\frac{x}{y}}\R\left(\frac{x}{n}\right)=\sqrt{x}\log x\,\sum_{n\leqslant\frac{x}{y}}\frac{1}{\sqrt{n}}-\sqrt{x}\sum_{n\leqslant\frac{x}{y}}\frac{\log n}{\sqrt{n}}.
\end{equation}
Letting $f_0(t)=\frac{1}{\sqrt{t}}$ and $f_1(t)=\frac{\log t}{t}$, we observe that $f_0(t)$ is decreasing for $t\geqslant 1$, and with $t_0=\e^2\approxeq 7.39$, the function $f_1(t)$ is increasing for $1\leqslant t\leqslant t_0$ and decreasing for $t\geqslant t_0$. Moreover, 
\[
\max_{t\geqslant 1}f_1(t)=f_1(\e^2)=\frac{2}{\e}<1.
\] 
Thus, comparison of a sum and an integral of a monotonic function \cite[Theorem 0.4]{tenenbaum} implies that there exists $\t_0\in[0,1]$ such that
\[
\sum_{n\leqslant\frac{x}{y}}\frac{1}{\sqrt{n}}=1+\int_1^{[\frac{x}{y}]}f_0(t)\,\d t+\t_0\left(f_0\left(\left[\frac{x}{y}\right]\right)-1\right).
\]
Since $\max_{t\geqslant 1}f_0(t)=f_0(1)=1$, we get
\begin{equation}\label{sum-f0-int}
\sum_{n\leqslant\frac{x}{y}}\frac{1}{\sqrt{n}}
=\int_1^{[\frac{x}{y}]}f_0(t)\,\d t+O^\ast(3)
=\int_1^{\frac{x}{y}}f_0(t)\,\d t+O^\ast(4).
\end{equation}
Also, we write
\[
\sum_{n\leqslant\frac{x}{y}}\frac{\log n}{\sqrt{n}}
=\sum_{1<n\leqslant 7}\frac{\log n}{\sqrt{n}}+\frac{\log 8}{\sqrt{8}}+\sum_{8<n\leqslant\frac{x}{y}}\frac{\log n}{\sqrt{n}}.
\]
There exists $\t_1,\t_2\in[0,1]$ such that
\[
\sum_{1<n\leqslant 7}\frac{\log n}{\sqrt{n}}=\int_1^7 f_1(t)\,\d t+\t_1 f_1(7)=\int_1^7 f_1(t)\,\d t+O^\ast\left(\frac{2}{\e}\right),
\]
and
\begin{align*}
\sum_{8<n\leqslant\frac{x}{y}}\frac{\log n}{\sqrt{n}}
&=\int_8^{[\frac{x}{y}]} f_1(t)\,\d t+\t_2\left(f_1\left(\left[\frac{x}{y}\right]\right)-f_1(8)\right)\\
&=\int_8^{[\frac{x}{y}]} f_1(t)\,\d t+O^\ast\left(\frac{4}{\e}\right).
\end{align*}
Thus,
\[
\sum_{n\leqslant\frac{x}{y}}\frac{\log n}{\sqrt{n}}
=\int_1^{[\frac{x}{y}]} f_1(t)\,\d t+O^\ast\left(\eta\right)
=\int_1^{\frac{x}{y}} f_1(t)\,\d t+O^\ast\left(\eta+\frac{2}{\e}\right),
\]
where $\eta=\frac{6}{\e}+f_1(8)+\int_7^8 f_1(t)\,\d t\approxeq 3.68$. Since $\eta+\frac{2}{\e}<5$, we get
\begin{equation}\label{sum-f1-int}
\sum_{n\leqslant\frac{x}{y}}\frac{\log n}{\sqrt{n}}
=\int_1^{\frac{x}{y}} f_1(t)\,\d t+O^\ast(5).
\end{equation}
By computation, we have
\begin{multline*}
\sqrt{x}\log x\int_1^{\frac{x}{y}}f_0(t)\,\d t-\sqrt{x}\int_1^{\frac{x}{y}} f_1(t)\,\d t\\=\frac{2x}{\sqrt{y}}\left(\log y+2\right)-2\sqrt{x}\left(\log x+2\right).
\end{multline*}
Thus, considering the identity \eqref{sum-R-hat} and the approximations \eqref{sum-f0-int} and \eqref{sum-f1-int}, we deduce that
\[
\sum_{n\leqslant\frac{x}{y}}\R\left(\frac{x}{n}\right)
=\frac{2x}{\sqrt{y}}\left(\log y+2\right)+O^\ast\left(15\sqrt{x}\log x\right).
\]
This completes the proof.
\end{proof}

\begin{proof}[Proof of Theorem \ref{sum-omega-Omega-full-explicit-RH-thm}]
Considering the hyperbolic identity \eqref{sum-omega-Dirichlet} and approximations \eqref{sum-p-y-x-p-order-infinity-RH} and \eqref{sum-pi-x-n-RH} we get
\begin{equation}\label{sum-omega-full-h3xy-hat}
\sum_{n\leqslant x}\o(n)=x\log\log x+Mx+x\sum_{j=1}^m\frac{a_j}{\log^j x}+O^\ast\left(\h_3(x,y)\right),
\end{equation}
where
\[
\h_3(x,y)=\h_1(x,y)+\h_2(x,y)+\left[\frac{x}{y}\right]\left(\li(y)-\pi(y)\right),
\]
with $\h_1(x,y)=\frac{x}{\sqrt{y}}\,(3\log y+4)+y$. 
By using \eqref{pi-li-bound-RH} we deduce that 
\begin{align*}
\left[\frac{x}{y}\right]\left(\li(y)-\pi(y)\right)
&=\left[\frac{x}{y}\right]O^\ast\left(\R(y)\right)\\
&=O^\ast\left(\frac{x}{y}\,\R(y)\right)
=O^\ast\left(\frac{x\log y}{\sqrt{y}}\right).
\end{align*}
Thus, \eqref{sum-omega-full-h3xy-hat} holds with $\h_3(x,y)=\h_1(x,y)+\h_2(x,y)+\frac{x\log y}{\sqrt{y}}$, or with
\begin{multline*}\label{h3xy}
\h_3(x,y)=\frac{m!}{\delta^{m+1}}\,\frac{x}{\log^{m+1} x}+\left(1+\frac{1}{\delta^{m+1}}\right)\e m!\,\frac{x^\Delta}{\log x}\\+\frac{6x\log y}{\sqrt{y}}+\frac{8x}{\sqrt{y}}+15\sqrt{x}\log x+y.
\end{multline*}
Now, we take $\delta=\Delta=\frac{2}{3}$, and hence $y=x^{\frac{2}{3}}$. Note that the assumption $x\geqslant\e$ covers $x^\delta>1.2$. Thus, we obtain \eqref{sum-omega-full-explicit-RH}, and consequently we get \eqref{sum-Omega-full-explicit-RH} by using \eqref{J-explicit-bound-x}. The proof is complete.
\end{proof}

\begin{proof}[Proof of Corollary \ref{sum-omega-Omega-explicit-order-2-RH-cor}] We use \eqref{sum-omega-full-explicit-RH} with $m=1$. By computation, we observe that $\Eh_\o(x,1)<\frac{11x}{\log^2x}$ for $x\geqslant x_0$. Thus, we get \eqref{sum-omega-explicit-order-2-RH}, and consequently \eqref{sum-Omega-explicit-order-2-RH}, by using the approximation \eqref{J-explicit-bound-x} and the inequality \eqref{33x-inq}. Also, we note that 
\[
\left(1-\g\right)\frac{x}{\log x}>\frac{12x}{\log^2x}>\frac{11x}{\log^2x},
\] 
provided $x>\e^{12/(1-\g)}$. Since $x_0>\e^{12/(1-\g)}$, we conclude the proof. 
\end{proof}

\subsection*{Acknowledgement} The author is greatly indebted to Prof. Horst Alzer for suggesting the problem of finding global numerical bounds for the number-theoretic omega functions and for many stimulating conversations.


\begin{thebibliography}{99}

\bibitem{apostol}
T. M. Apostol, \textit{Mathematical analysis}, Second edition, Addison-Wesley Publishing Company, (1974). 

\bibitem{diaconis}
P. Diaconis, \textit{Asymptotic expansions for the mean and variance of the number of prime factors of a number $n$}, Technical Report No. 96, Department of Statistics, Stanford University, December 14, 1976.

\bibitem{duncan}
R. L. Duncan, A class of additive arithmetical functions, \textit{Amer. Math. Monthly}, \textbf{69} (1962), 34--36.

\bibitem{finch}
S. R. Finch, {\em Mathematical constants}, Encyclopedia of Mathematics and its Applications, \textbf{94}, Cambridge University Press, 2003.

\bibitem{hardy-rama}
G. Hardy, S. Ramanujan, The normal number of prime factors of a number $n$, {\em Quart. J. Math.}, \textbf{48} (1917), 76--92.

\bibitem{hass-mia-Omega}
M. Hassani, Factorization of factorials and a result of Hardy and Ramanujan, {\em Math. Inequal. Appl.}, \textbf{15} (2012), 403--407. 

\bibitem{hass-Omega-omega-diff}
M. Hassani, On the difference of the number-theoretic omega functions, \textit{Journal of Combinatorics and Number Theory}, \textbf{8} (2016), 165--178.

\bibitem{hass-mia-omega}
M. Hassani, Remarks on the number of prime divisors of integers, \textit{Math. Inequal. Appl.}, \textbf{16} (2013), 843--849.

\bibitem{hass-Omega-s}
M. Hassani, Asymptotic expansions for the average of the generalized omega function, \textit{Integers}, \textbf{18} (2018), Article 23.

\bibitem{lagarias}
J. C. Lagarias, Euler's constant: Euler's work and modern developments, \textit{Bull. Amer. Math. Soc.} (N.S.), \textbf{50} (2013), 527--628.

\bibitem{litt}
J. E. Littlewood, Sur la distribution des nombres premiers, \textit{Comptes Rendus}, \textbf{158} (1914), 1869--1872.

\bibitem{saffari}
B. Saffari, Sur quelques applications de la ``m\'{e}thode de l'hyperbole'' de Dirichlet a la th\'{e}orie des nombres premiers, \textit{Enseignement Math.}, \textbf{14} (1970), 205--224.

\bibitem{scho}
L. Schoenfeld, Sharper bounds for the Chebyshev functions $\theta(x)$ and $\psi(x)$. II, \textit{Math. Comput.}, \textbf{30} (1976), 337--360.

\bibitem{tenenbaum}
G. Tenenbaum, \textit{Introduction to analytic and probabilistic number theory}, Third edition, American Mathematical Society, (2015).

\bibitem{trudgian}
T. Trudgian, Updating the error term in the prime number theorem, \textit{Ramanujan J.} \textbf{39} (2016), 225--234.

\bibitem{von-koch}
H. von Koch, Sur la distribution des nombres premiers, \textit{Acta Math.}, \textbf{24} (1901), 159--182.

\end{thebibliography}
\end{document}